\theoremstyle{plain}
\newtheorem{Thm}{Theorem}[section]
\newtheorem{Lem}[Thm]{Lemma}
\newtheorem{Prop}[Thm]{Proposition}
\theoremstyle{definition}
\newtheorem{Def}[Thm]{Definition}
\newtheorem{Def-Lem}[Thm]{Definition-Lemma}
\newtheorem{Conj}[Thm]{Conjecture}
\newtheorem{Rem}[Thm]{Remark}
\newtheorem*{Ack}{Acknowledgments}
\newtheorem{Question}[Thm]{Question}
\newcommand{\Spec}{\operatorname{Spec}}
\newcommand{\rank}{\operatorname{rank}}
\newcommand{\Cl}{\operatorname{Cl}}
\newcommand{\Pic}{\operatorname{Pic}}
\newcommand{\Int}{\operatorname{Int}}
\newcommand{\bNE}{\operatorname{\overline{NE}}}
\newcommand{\Bs}{\operatorname{Bs}}
\newcommand{\ord}{\operatorname{ord}}
\newcommand{\wprod}{\operatorname{wp}}
\newcommand{\inwt}{\mathbf{w}_{\operatorname{in}}}
\newcommand{\wt}{\mathbf{w}}
\newcommand{\ivr}{\operatorname{ivr}}
\newcommand{\mbA}{\mathbb{A}}
\newcommand{\mbC}{\mathbb{C}}
\newcommand{\mbP}{\mathbb{P}}
\newcommand{\mbQ}{\mathbb{Q}}
\newcommand{\mbZ}{\mathbb{Z}}
\newcommand{\mcB}{\mathcal{B}}
\newcommand{\mcC}{\mathcal{C}}
\newcommand{\mcH}{\mathcal{H}}
\newcommand{\mcL}{\mathcal{L}}
\newcommand{\msp}{\mathsf{p}}
\newcommand{\ratmap}{\dashrightarrow}
\def\imod#1{\allowbreak\mkern10mu({\operator@font mod}\,\,#1)}
\title[Birationally superrigid Fano 3-folds]{Birationally superrigid Fano 3-folds \\ of codimension 4}
\author{Takuzo~Okada}
\address{Department of Mathematics, Faculty of Science and Engineering, Saga University, Saga 840-8502 Japan}
\email{okada@cc.saga-u.ac.jp}
\subjclass[2010]{14J45, 14E08 \and 14E07}
\date{}
\begin{document}

\begin{abstract}
We determine birational superrigidity for a quasi-smooth prime Fano $3$-fold of codimension $4$ with no projection centers.  
In particular we prove birational superrigidity for Fano $3$-folds of codimension $4$ with no projection centers which are recently constructed by Coughlan and Ducat.
We also pose some questions and a conjecture regarding the classification of birationally superrigid Fano $3$-folds.
\end{abstract}

\maketitle


\section{Introduction} \label{sec:intro}

A {\it prime Fano $3$-fold} is a normal projective $\mbQ$-factorial $3$-fold $X$ with only terminal singularities such that $-K_X$ is ample and the class group $\Cl (X) \cong \mbZ$ is generated by $-K_X$.
For such $X$, there corresponds the anticanonical graded ring
\[
R (X, -K_X) = \bigoplus_{m \in \mbZ_{\ge 0}} H^0 (X, - m K_X),
\]
and by choosing generators we can embed $X$ into a weighted projective space.
By the {\it codimension} of $X$ we mean the codimension of $X$ in the weighted projective space.
Based on the analysis by Alt{\i}nok, Brown, Iano-Fletcher, Kasprzyk, Prokhorov, Reid, etc. (see for example \cite{ABR}), there is a database \cite{GRDB} of numerical data (such as Hilbert series) coming from graded rings that can be the anticanonical graded ring of a prime Fano $3$-fold.
Currently it is not a classification, but it serves as an overlist, meaning that the  anticanonical graded ring of a prime Fano $3$-fold appears in the database.

The database contains a huge number of candidates, which suggests difficulty of biregular classification of Fano $3$-folds.
The aim of this paper is to shed light on the classification of birationally superrigid Fano 3-folds.
Here, a Fano $3$-fold of Picard number $1$ is said to be {\it birationally superrigid} if any birational map to a Mori fiber space is biregular.
We remark that, in \cite{AO}, a possible approach to achieving birational classification of Fano $3$-folds is suggested by introducing notion of {\it solid Fano $3$-folds}, which are Fano $3$-folds not birational to neither a conic bundles nor a del Pezzo fibration.

Up to codimension $3$, we have satisfactory results on the classification of quasi-smooth prime Fano $3$-folds: the classification is completed in codimensions $1$ and $2$ (\cite{IF}, \cite{CCC}, \cite{Al}) and in codimension $3$ the existence is known for all $70$ numerical data in the database.
Moreover birational superrigidity of quasi-smooth prime Fano $3$-folds of codimension at most $3$ has been well studied as well (see \cite{IM}, \cite{CPR}, \cite{CP}, \cite{OkadaI}, \cite{AZ}, \cite{AO}, and see also \cite{OkadaII}, \cite{OkadaIII} for solid cases in codimension $2$).

For quasi-smooth prime Fano $3$-folds of codimension $4$, there are 145 candidates of numerical data in \cite{GRDB}.
In \cite{BKR}, existence for 116 data is proved, where the construction is given by birationally modifying a known variety.
This process is called unprojection and, as a consequence, a constructed Fano $3$-fold corresponding to each of the 116 data admits a Sarkisov link to a Mori fiber space, hence it is not birationally superrigid. 
The 116 families of Fano $3$-folds are characterized as those that possesses a singular point which is so called a type $\mathrm{I}$ projection center (see \cite{BKR} for datails).
There are other types of projection centers (such as types $\mathrm{II}_1, \dots, \mathrm{II}_7, \mathrm{IV}$ according to the database \cite{GRDB}).
Through the known results in codimensions $1$, $2$ and $3$, we can expect that the existence of a projection center violates birational superrigidity.
Therefore it is natural to consider prime Fano $3$-folds without projection centers for the classification of birational superrigid Fano $3$-folds (see also the discussion in Section \ref{sec:discussion}).

According to the database \cite{GRDB}, there are $5$ candidates of quasi-smooth prime Fano $3$-folds of codimension $4$ with no projection centers.
Those are identified by database numbers $\# 25$, $\#166$, $\# 282$, $\# 308$ and $\# 29374$.
Among them, $\# 29374$ corresponds to smooth prime Fano $3$-folds of degree $10$ embedded in $\mbP^7$, and it is proved in \cite{DIM} that they are not birationally superrigid (not even birationally rigid, a weaker notion than superrigidity).
Recently Coughlan and Ducat \cite{CD} constructed many prime Fano $3$-folds including those corresponding to $\# 25$ and $\# 282$ and we sometimes refer to these varieties as {\it cluster Fano $3$-folds}.
There are two constructions, $\mathsf{G}^{(4)}_2$ and $\mathsf{C}_2$ formats (see \cite[Section 5.6]{CD} for details and see Section \ref{sec:proof282} for concrete descriptions) for $\# 282$ and they are likely to sit in different components of the Hilbert scheme.

\begin{Thm} \label{mainthm1}
Let $X$ be a quasi-smooth prime Fano $3$-fold of codimension $4$ and of numerical type $\# 282$ which is constructed in either $\mathsf{G}^{(4)}_2$ format or $\mathsf{C}_2$ format.
If $X$ is constructed in $\mathsf{C}_2$ format, then we assume that $X$ is general.
Then $X$ is birationally superrigid.
\end{Thm}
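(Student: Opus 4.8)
The plan is to apply the Noether--Fano--Iskovskikh method. Since $X$ is a prime Fano $3$-fold with $\Cl(X) \cong \mbZ$ generated by $A := -K_X$, it is birationally superrigid if and only if for every mobile linear system $\mcM \subset |nA|$ (with $n$ a positive integer) the pair $(X, \tfrac1n \mcM)$ is canonical; equivalently, $X$ admits no \emph{maximal center}. A maximal center is necessarily either an irreducible curve or a closed point, the latter being either a nonsingular point or one of the terminal cyclic quotient singular points of $X$. The whole argument thus reduces to excluding each of these possibilities, and I would organise it along exactly these three cases. As a preliminary step I would read off, from the chosen embedding in the two formats, the value of $A^3$ and the basket of quotient singularities of $X$; these numbers drive every estimate below.

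First, curves. If an irreducible curve $\Gamma$ is a maximal center then $\mult_\Gamma \mcM > n$, so for general $M_1, M_2 \in \mcM$ the effective $1$-cycle $Z = M_1 \cdot M_2$ satisfies $Z \ge (\mult_\Gamma \mcM)^2\, \Gamma$, whence $n^2 A^3 = A \cdot Z > n^2 (A \cdot \Gamma)$ and therefore $A \cdot \Gamma < A^3$. Because $A^3$ is small for $\#282$, this confines $\Gamma$ to a very short list of low-degree curves, typically the curves lying in the coordinate strata through the singular points, and I would rule each of them out using quasi-smoothness and the explicit defining equations. Second, nonsingular points. If $p \in X$ is a nonsingular maximal center, the $4n^2$-inequality of Corti--Pukhlikov gives $\mult_p Z > 4 n^2$ for the same self-intersection cycle $Z$; intersecting $Z$ with an effective member of $|A|$ (or a small multiple) through $p$ and using $A \cdot Z = n^2 A^3$ produces a lower bound on $A^3$ of the shape $A^3 > 4$, which is violated since $A^3$ is small. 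Hence no nonsingular point is a maximal center.

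The main work, and the main obstacle, is the exclusion of the terminal cyclic quotient singular points. For a point $p$ of type $\tfrac1r(1,a,r-a)$ I would take the Kawamata (weighted) blowup $\varphi \colon Y \to X$, with exceptional divisor $E$ and $K_Y = \varphi^* K_X + \tfrac1r E$. If $p$ is a maximal center then $\mult_E \mcM > n/r$, and writing $\mcM_Y = \varphi^* \mcM - (\mult_E \mcM)\, E$ I would look for an auxiliary nef class of the form $L = \varphi^* A - \tfrac{c}{r} E$ on $Y$ and derive a contradiction from a triple intersection such as $L \cdot \mcM_Y^2 \ge 0$ (or $L^2 \cdot \mcM_Y \ge 0$) against the strict inequality $\mult_E \mcM > n/r$. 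The delicate point is to prove that $L$ is genuinely nef, equivalently to control the curves on $Y$ swept out or contracted by the relevant systems; this is precisely where the explicit structure of $X$ in the $\mathsf{G}^{(4)}_2$ and $\mathsf{C}_2$ formats enters, through the weighted initial forms of the defining equations at $p$.

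I expect the two formats to require separate local computations, and I anticipate that the generality hypothesis in the $\mathsf{C}_2$ case is needed exactly to guarantee that enough coordinate monomials survive in these initial forms, so that the auxiliary class can be shown to be nef (or, when $L$ alone does not suffice, so that a suitable second weighted blowup resolves the estimate). A secondary point to watch is to confirm, using quasi-smoothness, that the singular points are exactly the predicted cyclic quotient singularities and that no further, non-quotient singularities occur. Once every point of the basket has been excluded in both formats, all three cases are closed and the birational superrigidity of $X$ follows.
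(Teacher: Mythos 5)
Your overall framework (Noether--Fano, then case analysis over curves, nonsingular points, and the basket of quotient singularities) is the same as the paper's, and your treatment of the easy cases is fine: curves and nonsingular points are excluded by the degree bounds you describe (here $(-K_X)^3 = 1/42$), and most of the quotient singularities are excluded exactly by your proposed nef-class test $L = \varphi^*(-K_X) - \tfrac{c}{r}E$ with $L\cdot(-K_Y)^2 \le 0$, where nefness of $L$ comes from coordinate divisors whose common intersection is finite. None of that requires the format-specific equations.

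The genuine gap is in the one case where the formats actually enter: the singular point $\msp$ of type $\frac{1}{6}(1,1,5)$, located at $\msp_r$. For this point the nef-class strategy you propose breaks down. The only coordinates of low degree available are $p$ (degree $1$) and $q$ (degree $6$), and $\Gamma := D_p \cap D_q$ is \emph{not} a finite set --- it is a curve through $\msp$ --- so one cannot certify nefness of $-\varphi^*K_X - \lambda E$ by intersecting coordinate divisors, and no obvious auxiliary nef class with the required negativity exists. The paper's resolution is a different exclusion criterion (the ``negative curve'' test of \cite[Lemma 2.18]{OkadaII}): using the explicit equations of the $\mathsf{G}^{(4)}_2$ or $\mathsf{C}_2$ format one shows that $\Gamma$ is an irreducible and reduced curve, that $\tilde{D}_p\cap\tilde{D}_q\cap E$ is finite so that $\tilde{D}_p\cdot\tilde{D}_q = \tilde{\Gamma}$ on the Kawamata blowup $Y$, and then computes
\[
(\tilde{D}_p\cdot\tilde{\Gamma}) = (\tilde{D}_p^2\cdot\tilde{D}_q) = \frac{1}{7} - \frac{e}{30} < 0
\]
with $e = \ord_E(D_q) \ge 6/6$; since $\tilde{D}_p \sim -K_Y$ and $(E\cdot\tilde{\Gamma})>0$, this excludes $\msp$. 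Your hedge about ``a suitable second weighted blowup'' does not supply this step, and your guess about the generality hypothesis is only partly right: in the $\mathsf{C}_2$ format it is used to normalize $S_6 = q$ (the paper assumes $q \in S_6$), which pins the singular point at $\msp_r$ and makes the curve $\Gamma$ computable and irreducible. Without an argument of this kind for the $\frac{1}{6}(1,1,5)$ point, your proof is incomplete precisely where the theorem's content lies.
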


For the remaining three candidates $\# 25$, $\# 166$ and $\# 282$, we can prove birational superrigidity in a stronger manner; we are able to prove birational superrigidity for these $3$ candidates by utilizing only numerical data.
Here, by numerical data for a candidate Fano $3$-fold $X$, we mean the weights of the weighted projective space, degrees of the defining equations, the anticanonical degree $(-K_X)^3$ and the basket of singularities of $X$ (see Section \ref{sec:proofnum}).
Note that we do not know the existence of Fano $3$-folds for $\# 166$ and $\# 308$.

\begin{Thm} \label{mainthm2}
Let $X$ be a quasi-smooth prime Fano $3$-fold of codimension $4$  and of numerical type $\# 25$, $\# 166$ or $\#308$.
Then $X$ is birationally superrigid. 
\end{Thm}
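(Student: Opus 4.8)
The plan is to run the method of maximal singularities (the Noether--Fano--Iskovskikh criterion in the form due to Corti). Writing $A = -K_X = \mcO_X(1)$, birational superrigidity of a $\mbQ$-factorial terminal Fano $3$-fold with $\Cl(X) = \mbZ \cdot A$ is equivalent to the assertion that for every movable linear system $\mcM \subset |nA|$ ($n>0$) the pair $(X, \tfrac1n \mcM)$ is canonical; this simultaneously yields $\Bir(X) = \Aut(X)$ and the non-existence of any other Mori fiber space in the birational class. A center of a non-canonical singularity on a terminal Fano $3$-fold is a nonsingular point, a curve, or one of the terminal cyclic quotient singular points. The reason the argument can use \emph{only} numerical data is that quasi-smoothness forces the singular points of $X$ to sit at the coordinate points of the ambient weighted projective space with analytic type prescribed by the basket, and likewise confines the curves of small $A$-degree to the coordinate strata; hence every invariant entering the exclusion inequalities is read off from the weights, the basket and $A^3$. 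I would therefore dispose of $\#25$, $\#166$, $\#308$ by checking, for each, the finite list of numerical inequalities below.

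First, exclude nonsingular points and curves. If a nonsingular point $P$ is a maximal center, Corti's inequality gives $\mult_P(\mcM^2) > 4n^2$ for the self-intersection cycle $\mcM^2 = M_1 \cdot M_2$ of two general members; intersecting $\mcM^2$ with a general surface through $P$ cut out by $|A|$ and comparing with $\mcM^2 \cdot A = n^2 A^3$ would force $A^3 > 4$, which is absurd since $A^3 \ll 1$ for these families. For a curve $C$ the chain $n^2 A^3 = A \cdot \mcM^2 \ge (\mult_C \mcM)^2 (A \cdot C) > n^2 (A\cdot C)$ shows that a maximal center curve must satisfy $A\cdot C < A^3$; it thus suffices to verify $A\cdot C \ge A^3$ for every irreducible curve. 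Any curve disjoint from $\Sing(X)$ has $A\cdot C \ge 1 > A^3$, so the issue is confined to the finitely many coordinate curves through the quotient singularities, whose degrees $\tfrac{1}{a_i a_j}$ (and the analogous local contributions) are a direct numerical check.

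Second, and this is the crux, exclude the quotient singular points. For a basket point $P$ of type $\tfrac1r(1,a,r-a)$ let $\varphi\colon Y \to X$ be the Kawamata blowup, with exceptional divisor $E$ and $-K_Y = A_Y := \varphi^* A - \tfrac1r E$. If $P$ is a maximal center then $m := \mult_E \mcM > \tfrac{n}{r}$, so the movable transform is $\mcM_Y = n A_Y - \ell E$ with $\ell = m - \tfrac{n}{r} > 0$. Using the standard intersection formulae for the Kawamata blowup, namely $(\varphi^*A)^2 \cdot E = (\varphi^*A)\cdot E^2 = 0$ and $E^3 = \tfrac{r^2}{a(r-a)}$, one computes
\[
\mcM_Y^2 \cdot A_Y = n^2 A^3 - \frac{E^3}{r}\, m^2 .
\]
When $A_Y = -K_Y$ is nef this number is non-negative, which together with $m > \tfrac{n}{r}$ forces $A^3 > \tfrac{1}{r\,a(r-a)}$; hence the numerical inequality $A^3 \le \tfrac{1}{r\,a(r-a)}$ excludes $P$ outright.

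Third, when $A_Y$ fails to be nef --- exactly the case for the singularities of larger index that survive the naive bound --- I would run the two-ray game on $Y$, which is legitimate since $\rho(Y) = 2$. The ray other than $\Exc(\varphi)$ initiates a sequence of flips/flops and a terminal contraction, each step governed by the combinatorics of the weights and the basket; on the resulting model a genuinely nef class is available, and intersecting $\mcM_Y^2$ (or its strict transform) with it yields the same type of numerical contradiction with $m > \tfrac{n}{r}$. The main obstacle is precisely this last step: establishing, from numerical data alone, that the two-ray game proceeds as the weights predict and that the final model carries the required nef test class, without recourse to explicit equations. Assembling the three exclusions, no movable system admits a maximal center, so $(X,\tfrac1n\mcM)$ is always canonical and $X$ is birationally superrigid; the verification is then carried out family by family for $\#25$, $\#166$ and $\#308$.
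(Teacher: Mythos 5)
Your skeleton (maximal singularities, Corti's $4n^2$ inequality for nonsingular points, the degree bound for curves, the Kawamata blowup computation at the basket points) matches the paper's, and your numerical check that $A^3\le \frac{1}{r\,a(r-a)}$, i.e.\ $(-K_Y)^3\le 0$, holds for every basket point of $\#25$, $\#166$ and $\#308$ is correct. The genuine gap is in how you certify the exclusion of the quotient singular points. Your Step 2 is conditional on $-K_Y$ being nef on $Y$, and you give no way to verify this from numerical data; your Step 3 then falls back on a two-ray game which you yourself flag as the unresolved obstacle. The paper never runs a two-ray game and never needs $-K_Y$ nef: it uses the weaker test of Lemma \ref{lem:exclsingNE}, namely that it suffices to exhibit \emph{some} nef class $N=-b\varphi^*K_X-cE$ with $N\cdot(-K_Y)^2\le 0$. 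Nefness of such an $N$ is obtained from Lemma \ref{lem:findnef} by producing coordinate divisors $D_{i_1},\dots,D_{i_k}$ of controlled order along $E$ whose common intersection contains no curve through the point, and verifying \emph{that} (e.g.\ that $\Pi_X(\mcC\cup\{x_k\})=\emptyset$) is the actual content of the proof: it is done by extracting monomials such as $t^2\in F_1$, $u^2\in F_3$, $w^2\in F_9$ forced by quasi-smoothness and solving the restricted equations $F_j|_{\Pi}=0$. Your proposal contains no substitute for this step, so the hypothesis your intersection computation rests on is unproved.

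There is also a configuration your three steps cannot reach even in principle: at the $\frac{1}{5}(1,2,3)$ point of $\#308$, in the degenerate case where the $q^2r$, $q^2s$ terms of $F_3,F_4$ are proportional, the paper does not produce a nef test class at all and instead excludes the point by the infinitely-many-curves argument of Lemma \ref{lem:exclsinginfini}, intersecting $D_p$ with a pencil $\mcL\subset|-6K_X|$ and showing $(-K_Y\cdot\tilde D_p\cdot\tilde L)\le 0$. Nothing in your scheme covers this except the two-ray game you concede you cannot carry out numerically. A smaller point: cutting $\mcM^2$ with a general surface in $|A|$ fails when $|-K_X|=\emptyset$ (as for $\#25$ and $\#166$, whose minimal weight is $2$); the usable condition is $a_{n-1}a_n(-K_X)^3\le 4$ as in Lemma \ref{lem:exclnonsingpt}, which does hold here.
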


\begin{Ack}
The author would like to thank Stephen Coughlan for giving me fruitful information on cluster Fano 3-folds.
He is partially supported by JSPS KAKENHI Grant Number JP18K03216.
\end{Ack}

\section{Birational superrigidity}

\subsection{Basic properties}

Throughout this subsection, we assume that $X$ is a Fano $3$-fold of Picard number $1$, that is, $X$ is a normal projective $\mbQ$-factorial $3$-fold such that $X$ has only terminal singularities, $-K_X$ is ample and $\rank \Pic (X) = 1$.

\begin{Def}
We say that $X$ is {\it birationally superrigid} if any birational map $\sigma \colon X \ratmap Y$ to a Mori fiber space $Y \to T$ is biregular.
\end{Def}

By an {\it extremal divisorial extraction} $\varphi \colon (E \subset Y) \to (\Gamma \subset X)$, we mean an extremal divisorial contraction $\varphi \colon Y \to X$ from a normal projective $\mbQ$-factorial variety $Y$ with only terminal singularities such that $E$ is the $\varphi$-exceptional divisor and $\Gamma = \varphi (E)$.

\begin{Def}
Let $\mcH \sim_{\mbQ} -n K_X$ be a movable linear system, where $n$ is a positive integer.
A {\it maximal singularity} of $\mcH$ is an extremal extraction $\varphi \colon (E \subset Y) \to (\Gamma \subset X$ such that
\[
c (X, \mcH) = \frac{a_E (K_X)}{m_E (\mcH)} < \frac{1}{n},
\]
where
\begin{itemize}
\item $c (X, \mcH) := \max \{\, \lambda \mid \text{$K_X + \lambda \mcH$ is canonical}\,\}$ is the {\it canonical threshold} of $(X, \mcH)$,
\item $a_E (K_X)$ is the discrepancy of $K_X$ along $E$, and
\item $m_E (\mcH)$ is the multiplicity along $E$ of the proper transform $\varphi_*^{-1} \mcH$ on $Y$.
\end{itemize}
We say that an extremal divisorial extraction is a {\it maximal singularity} if if there exists a movable linear system $\mcH$ such that the extraction is a maximal singularity of $\mcH$.
A subvariety $\Gamma \subset X$ is called a {\it maximal center} if there is an maximal singularity $Y \to X$ whose center is $\Gamma$. 
\end{Def}

\begin{Thm}
If $X$ admits no maximal center, then $X$ is birationally superrigid.
\end{Thm}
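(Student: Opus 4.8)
The plan is to argue by contradiction, using the Noether--Fano--Iskovskikh method. Suppose $X$ admits no maximal center and yet there is a birational map $\sigma \colon X \ratmap Y$ to a Mori fiber space $\pi \colon Y \to T$ which is not biregular. First I would choose a movable linear system $\mcM$ on $Y$ that is sufficiently positive relative to $\pi$; concretely one may take $\mcM \subset |{-r K_Y} + \pi^* A|$ for a very ample divisor $A$ on $T$ and $r \gg 0$ (and simply $\mcM \subset |{-r K_Y}|$ when $T$ is a point), so that $\mcM$ has no fixed component. Let $\mcH := \sigma_*^{-1} \mcM$ be its proper transform on $X$. Since $\Cl (X) \cong \mbZ$ is generated by $-K_X$ and $\mcH$ is movable and nonzero, we have $\mcH \sim -n K_X$ for some positive integer $n$.

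Next I would invoke the Noether--Fano--Iskovskikh inequality. Because $\sigma$ is a birational map between Mori fiber spaces that is not an isomorphism (in particular not biregular, as $X$ has Picard number $1$), the pair $(X, \tfrac{1}{n} \mcH)$ fails to be canonical; equivalently $c (X, \mcH) < 1/n$. Hence there is a geometric valuation $E$ over $X$ realizing the canonical threshold, with center $\Gamma = c_X (E)$ a point or a curve on $X$ and with $a_E (K_X) / m_E (\mcH) = c (X, \mcH) < 1/n$. I would cite the standard account of this inequality in the Sarkisov program rather than reprove it, since its derivation is formal and independent of the geometry of $X$.

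The remaining step is to upgrade the abstract threshold-computing valuation $E$ to an honest extremal divisorial extraction, thereby producing a maximal singularity in the sense of the definition above and hence a maximal center $\Gamma$. When $\Gamma$ lies in the smooth locus this is immediate (a suitable weighted blow-up works), but in general $\Gamma$ may pass through the terminal singular points of $X$, where the valuation computing the threshold need not itself be extremal. Here I would appeal to the classification of extremal divisorial contractions over three-dimensional terminal singularities (Kawamata's result for cyclic quotient terminal points, together with the work of Kawakita and Corti in general) and run a relative minimal model program starting from an extraction of $E$. This yields an extremal divisorial extraction $\varphi \colon (E' \subset Y') \to (\Gamma \subset X)$ whose exceptional divisor still computes the threshold, that is $a_{E'}(K_X)/m_{E'}(\mcH) = c(X,\mcH) < 1/n$, so that $\varphi$ is a maximal singularity of $\mcH$ and $\Gamma$ is a maximal center---contradicting the hypothesis. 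This realization step, passing from a non-canonical valuation to an extremal divisorial extraction, is the main obstacle and the part resting most heavily on three-dimensional birational geometry; everything else in the argument is formal.
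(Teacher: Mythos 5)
Your outline is the standard Noether--Fano--Iskovskikh argument, and it is correct: the paper itself states this theorem without proof, treating it as the well-known foundation of the method of maximal singularities (as in Corti--Pukhlikov--Reid), so there is no in-paper argument to diverge from. Your identification of the genuinely nontrivial step --- upgrading the non-canonical valuation computing $c(X,\mcH)$ to an extremal divisorial extraction via Corti's result and a relative MMP --- is exactly where the three-dimensional input enters, and the rest of your reduction (choosing $\mcM$ very ample over $T$, writing $\mcH \sim -nK_X$ using $\Cl(X) \cong \mbZ$, and applying the NFI inequality to the $\mbQ$-trivial, hence nef, divisor $K_X + \tfrac{1}{n}\mcH$) is the standard formal part.
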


For a proof of birational superrigidity of a given Fano $3$-fold $X$ of Picard number $1$, we need to exclude each subvariety of $X$ as a maximal center.
In the next subsection we will explain several methods of exclusion under a relatively concrete setting.
Here we discuss methods of excluding terminal quotient singular points in a general setting.

For a terminal quotient singular point $\msp \in X$ of type $\frac{1}{r} (1, a, r-a)$, where $r$ is coprime to $a$ and $0 < a < r$, there is a unique extremal divisorial extraction $\varphi \colon (E \subset Y) \to (\msp \in X)$, which is the weighted blowup with weight $\frac{1}{r} (1, a, r-a)$, and we call it the {\it Kawamata blowup} (see \cite{Kawamata} for details). 
The integer $r > 1$ is called the {\it index} of $\msp \in X$.
For the Kawamata blowup $\varphi \colon (E \subset Y) \to (\msp \in X)$, we have $K_Y = \varphi^*K_X + \frac{1}{r} E$ and 
\[
(E^3) = \frac{r^2}{a (r-a)}.
\]
For a divisor $D$ on $X$, the {\it order} of $D$ along $E$, denote by $\ord_E (D)$, is defined to be the coefficient of $E$ in $\varphi^*D$.

We first explain the most basic method.

\begin{Lem}[{\cite[Lemma 5.2.1]{CPR}}] \label{lem:exclsingNE}
Let $\msp \in X$ a terminal quotient singular point and $\varphi \colon (E \subset Y) \to (\msp \in X)$ the Kawamata blowup.
If $(-K_Y)^2 \notin \Int \bNE (Y)$, then $\msp$ is not a maximal center.
\end{Lem}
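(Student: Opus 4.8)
The plan is to argue by contradiction, following the standard two-ray argument. Suppose $\msp$ is a maximal center, so that there is a movable linear system $\mcH \sim_{\mbQ} -n K_X$ for which the Kawamata blowup $\varphi$ is a maximal singularity. Since $a_E(K_X) = 1/r$, the defining inequality $a_E(K_X)/m_E(\mcH) < 1/n$ reads $m_E(\mcH) > n/r$. Writing the strict transform as $\mcH_Y = \varphi_*^{-1}\mcH$ and using $K_Y = \varphi^* K_X + \tfrac{1}{r}E$, I would first record the numerical equivalence
\[
\mcH_Y \equiv -n K_Y - \beta E, \qquad \beta = m_E(\mcH) - \frac{n}{r} > 0.
\]

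Next I would exploit that $\rank \Pic(Y) = 2$, so that $\bNE(Y)$ and the nef cone are two-dimensional. One boundary ray of the nef cone is spanned by $\varphi^*(-K_X)$, which is nef and vanishes on the ray contracted by $\varphi$; let $D$ denote a generator of the other boundary ray, so that $D$ is nef. By the projection formula and $\varphi_* E = 0$ one has $\varphi^*(-K_X)^2 \cdot E = \varphi^*(-K_X)\cdot E^2 = 0$, hence
\[
(-K_Y)^2 \cdot \varphi^*(-K_X) = \varphi^*(-K_X)^3 = (-K_X)^3 > 0.
\]
Thus $(-K_Y)^2$ pairs strictly positively with one nef generator, and so the hypothesis $(-K_Y)^2 \notin \Int \bNE(Y)$ can only be caused by the other generator: it forces $(-K_Y)^2 \cdot D \le 0$.

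To reach the contradiction I would write every relevant divisor in the form $\varphi^*(-K_X) - tE$ and combine the intersection numbers above with $(E^3) = r^2/(a(r-a))$ to obtain the single formula
\[
\bigl(\varphi^*(-K_X) - sE\bigr)^2 \cdot \bigl(\varphi^*(-K_X) - tE\bigr) = (-K_X)^3 - s^2 t\,(E^3).
\]
Writing $D = \varphi^*(-K_X) - \mu E$ with $\mu > 0$ and taking $s = 1/r$, the inequality $(-K_Y)^2 \cdot D \le 0$ becomes the lower bound $\mu\,(E^3) \ge r^2 (-K_X)^3$. On the other hand $\mcH_Y = n\bigl(\varphi^*(-K_X) - \tau E\bigr)$ with $\tau = m_E(\mcH)/n > 1/r$; since $\mcH_Y^2$ is represented by an effective $1$-cycle and $D$ is nef, $\mcH_Y^2\cdot D \ge 0$, which by the same formula gives $\tau^2\,\mu\,(E^3) \le (-K_X)^3$. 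Combining the two bounds yields $\tau^2 \le 1/r^2$, i.e. $m_E(\mcH) \le n/r$, contradicting the maximal-singularity inequality.

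The intersection-number bookkeeping is routine; the step carrying the real content is the structural input that $Y$ has Picard number two with nef cone bounded by $\varphi^*(-K_X)$ and a second ray $\varphi^*(-K_X) - \mu E$ with $\mu > 0$. This is where one uses that $\varphi$ is an extremal divisorial contraction: the contracted ray supplies one edge of $\bNE(Y)$, while $E$ failing to be nef forces the complementary nef edge to subtract a positive multiple of $E$. Granting this, the hypothesis precisely removes the only nef direction against which $(-K_Y)^2$, and hence the nearby movable class $\mcH_Y^2$, could fail to be positive, so no maximal singularity over $\msp$ can exist.
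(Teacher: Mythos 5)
Your argument is correct. The paper itself gives no proof of this lemma --- it is quoted from \cite[Lemma~5.2.1]{CPR} --- and the source's proof is the same two-ray computation you perform, just phrased on the other side of the duality: CPR decompose $n^2(-K_Y)^2$ as the effective $1$-cycle $\mcH_Y^2$ plus a positive multiple of $-E^2$ (which lies on the $\varphi$-contracted extremal ray of $\bNE(Y)$), and conclude that $(-K_Y)^2$ would be interior unless $\mcH_Y^2$ were contracted by $\varphi$, which is absurd; you instead pair everything against the second generator $D = \varphi^*(-K_X) - \mu E$ of the nef cone and compare $(-K_Y)^2\cdot D \le 0$ with $\mcH_Y^2\cdot D \ge 0$. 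The two versions are equivalent, and your bookkeeping ($a_E(K_X)=1/r$, the triple-product formula, $E^3>0$) is right. The only step you should make fully explicit is the sign $\mu>0$: the clean reason is not that ``$E$ fails to be nef'' but that $E\cdot C<0$ for every curve $C$ contracted by $\varphi$ (since $-E|_E$ is ample), so the second nef edge, which must pair strictly positively with the contracted ray, necessarily has negative $E$-coefficient; together with the uniqueness of the Kawamata blowup among extremal extractions centered at $\msp$ (which you use tacitly to reduce ``maximal center'' to this one extraction), this closes the argument.
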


For the application of the above lemma, we need to find a nef divisor on $Y$.
The following result, which is a slight generalization of \cite[Lemma 6.6]{OkadaII}, is useful.

\begin{Lem} \label{lem:findnef}
Let $\msp \in X$ be a terminal quotient singular point and $\varphi \colon (E \subset Y) \to (\msp \in X)$ the Kawamata blowup.
Assume that there are effective Weil divisors $D_1, \dots, D_k$ such that the intersection $D_1 \cap \cdots \cap D_k$ does not contain a curve through $\msp$.
We set
\[
e := \min \{\ord_E (D_i)/n_i \mid 1 \le i \le k\},
\]
where $n_i$ is the positive rational number such that $D_i \sim_{\mbQ} - n_i K_X$. 
Then $- \varphi^* K_X - \lambda E$ is a nef divisor for $0 \le \lambda \le e$.
\end{Lem}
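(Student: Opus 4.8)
The plan is to reduce the problem to a single value of $\lambda$ and then to test $L_\lambda := -\varphi^* K_X - \lambda E$ against an arbitrary irreducible curve $C \subset Y$, splitting into cases according to the position of $C$ relative to $E$. The key preliminary observation is that, for a fixed $C$, the intersection number $L_\lambda \cdot C = (-\varphi^* K_X) \cdot C - \lambda (E \cdot C)$ is an affine function of $\lambda$. Since $-K_X$ is ample, its pullback $-\varphi^* K_X$ is nef, so $L_0 \cdot C \ge 0$. Consequently, once I show $L_e \cdot C \ge 0$, affineness forces $L_\lambda \cdot C \ge 0$ for all $\lambda \in [0,e]$. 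Thus it suffices to prove that $L_e$ is nef.

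To set up the computation I would write $e_i := \ord_E(D_i)/n_i$, so that $e = \min_i e_i$, and let $\tilde D_i := \varphi_*^{-1} D_i$ denote the proper transform. The relation $D_i \sim_{\mbQ} -n_i K_X$ pulls back to $\varphi^* D_i = \tilde D_i + \ord_E(D_i)\, E \sim_{\mbQ} -n_i \varphi^* K_X$, which rearranges to $\tilde D_i \sim_{\mbQ} n_i\bigl(-\varphi^* K_X - e_i E\bigr)$. In particular, for every $i$ one has the decomposition $L_e = \tfrac{1}{n_i}\tilde D_i + (e_i - e) E$ with $e_i - e \ge 0$, and also $e \ge 0$ since each $\ord_E(D_i) \ge 0$ and $n_i > 0$.

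Now I would carry out the case analysis for an irreducible curve $C$. If $C \subset E$, then $\varphi_* C = 0$ and $L_e \cdot C = -e(E \cdot C)$; since $\mcO_E(E)$ is anti-ample (a negative multiple of the ample generator of $\Cl(E)$, consistent with $(E^3) = r^2/(a(r-a)) > 0$), we get $E \cdot C < 0$, hence $L_e \cdot C \ge 0$. If $C \not\subset E$ and $C \not\subset \Supp \tilde D_i$ for some $i$, then $\tilde D_i \cdot C \ge 0$, while $E \cdot C \ge 0$ because $C \not\subset E$; combined with $e_i - e \ge 0$, the decomposition above yields $L_e \cdot C \ge 0$. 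The remaining possibility is $C \not\subset E$ but $C \subset \bigcap_i \Supp \tilde D_i$. In that case $\varphi(C)$ is a curve, and $\varphi(C) \subset \bigcap_i \varphi(\Supp \tilde D_i) \subseteq \bigcap_i \Supp D_i = \Supp(D_1 \cap \cdots \cap D_k)$. By hypothesis this intersection contains no curve through $\msp$, so $\msp \notin \varphi(C)$; therefore $C \cap E = \emptyset$, giving $E \cdot C = 0$ and $L_e \cdot C = L_0 \cdot C \ge 0$. This exhausts all curves, so $L_e$ is nef and the lemma follows.

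The step I expect to be the main obstacle is the last subcase: ruling out a curve $C \not\subset E$ that lies in the total intersection $\bigcap_i \Supp \tilde D_i$. This is precisely where the hypothesis on $D_1 \cap \cdots \cap D_k$ must be invoked, by pushing the conclusion back down to $X$ along $\varphi$ and observing that membership of $C$ in $E$ would force $\varphi(C)$ to pass through $\msp$. A secondary point requiring care is the sign of $E \cdot C$ for $C \subset E$, i.e. the anti-ampleness of $\mcO_E(E)$, which I would justify directly from the weighted-blowup structure of the Kawamata blowup and the given value of $(E^3)$.
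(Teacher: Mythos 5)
Your proof is correct, but it is structured differently from the paper's. The paper does not argue directly with curves at all: it first reduces to the case of prime divisors by replacing each $D_i$ with a component $D'_i$ of maximal ``vanishing ratio'' $\ord_E(D'_i)/n'_i$ (the ratio of a sum is at most the maximum of the ratios, so $e' \ge e$, and the intersection only shrinks), and then invokes \cite[Lemma 6.6]{OkadaII} for prime divisors, finishing by interpolating with the nef divisor $-\varphi^*K_X$. You instead prove the underlying nefness statement from scratch via the decomposition $-\varphi^*K_X - eE = \tfrac{1}{n_i}\tilde D_i + (e_i - e)E$ and a three-way case analysis on an irreducible curve $C$ ($C \subset E$; $C \not\subset E$ missing some $\tilde D_i$; $C \not\subset E$ inside all $\tilde D_i$, where the hypothesis on $D_1 \cap \cdots \cap D_k$ forces $C \cap E = \emptyset$). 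This is essentially the content of the cited external lemma, and your version has the small advantage of working directly with arbitrary effective divisors, so the paper's reduction to prime components becomes unnecessary. The one step you should tighten is the sign $E \cdot C < 0$ for $C \subset E$: the computation $(E^3) > 0$ is only a consistency check and does not by itself distinguish ample from anti-ample; the clean argument is that $-K_Y$ is $\varphi$-ample for an extremal divisorial contraction and $\tfrac{1}{r}E = K_Y - \varphi^*K_X$, so $-E|_E$ is ample. With that point made precise, your argument is complete.
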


\begin{proof}
We may assume $e > 0$, that is, $D_i$ passes through $\msp$ for any $i$.
For an effective divisor $D \sim_{\mbQ} - n K_X$, we call $\ord_E (D)/n$ the vanishing ratio of $D$ along $E$.
For $1 \le i \le k$, we choose a component of $D_i$, denoted $D'_i$, which has maximal vanishing ratio along $E$ among the components of $D_i$.
Clearly we have $D'_1 \cap \cdots \cap D'_k$ does not contain a curve through $\msp$ and we have
\[
e' := \min \{ \ord_E (D'_i)/n'_i \mid 1 \le i \le k\} \ge e,
\]
where $n'_i \in \mbQ$ is such that $D'_i \sim_{\mbQ} - n'_i K_X$.
Since $D'_1, \dots, D'_k$ are prime divisors, we can apply \cite[Lemma 6.6]{OkadaII} and conclude that $-\varphi^*K_X - e' E$ is nef.
Then so is $-\varphi^*K_X - \lambda E$ for any $0 \le \lambda \le e'$ since $-\varphi^*K_X$ is nef, and the proof is completed.
\end{proof}

We have another method of exclusion which can be sometimes effective when Lemma \ref{lem:exclsingNE} is not applicable.

\begin{Lem} \label{lem:exclsinginfini}
Let $\msp \in X$ be a terminal quotient singular point and $\varphi \colon (E \subset Y) \to (\msp \in X)$ the Kawamata blowup.
Suppose that there exists an effective divisor $S$ on $X$ passing through $\msp$ and a linear system $\mcL$ of divisors on $X$ passing through $\msp$ with the following properties.
\begin{enumerate}
\item $S \cap \Bs \mcL$ does not contain a curve passing through $\msp$, and
\item For a general member $L \in \mcL$, we have
\[
(- K_Y \cdot \tilde{S} \cdot \tilde{L}) \le 0,
\]
where $\tilde{S}, \tilde{L}$ are the proper transforms of $S, L$ on $Y$, respectively.
\end{enumerate}
Then $\msp$ is not a maximal center.
\end{Lem}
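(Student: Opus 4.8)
The plan is to argue by contradiction, assuming $\msp$ \emph{is} a maximal center and producing a positive intersection number that hypothesis (2) forbids. So suppose there is a movable linear system $\mcH \sim_{\mbQ} -nK_X$ of which the Kawamata blowup $\varphi$ is a maximal singularity. Since $a_E(K_X) = 1/r$, the defining inequality $a_E(K_X)/m_E(\mcH) < 1/n$ reads $m := m_E(\mcH) > n/r$. Writing $\tilde{\mcH} = \varphi_*^{-1}\mcH \sim_{\mbQ} \varphi^*(-nK_X) - mE$ and substituting $\varphi^*(-K_X) = -K_Y + \tfrac1r E$ (which is equivalent to $K_Y = \varphi^* K_X + \tfrac1r E$), I get
\[
\tilde{\mcH} \sim_{\mbQ} -nK_Y - \mu E, \qquad \mu := m - \tfrac{n}{r} > 0.
\]

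Next I would rewrite this as $n(-K_Y) \sim_{\mbQ} \tilde{\mcH} + \mu E$ and intersect both sides with the effective $1$-cycle $\tilde S \cdot \tilde L$, where $L \in \mcL$ is general, obtaining the master identity
\[
n\,(-K_Y \cdot \tilde S \cdot \tilde L) = (\tilde{\mcH} \cdot \tilde S \cdot \tilde L) + \mu\,(E \cdot \tilde S \cdot \tilde L).
\]
By hypothesis (2) the left-hand side is $\le 0$, and $\mu > 0$, so a contradiction follows as soon as I show that both terms on the right are nonnegative and that at least one is strictly positive. The strict positivity I would extract from the $E$-term: since both $S$ and $L$ pass through $\msp$, their proper transforms $\tilde S, \tilde L$ meet the exceptional divisor $E$ in nonzero effective curves, and as $E$ is a weighted projective plane (hence of Picard number $1$ with an ample generator of positive self-intersection), any two nonzero effective divisors on it meet positively; thus $(E \cdot \tilde S \cdot \tilde L) = (\tilde S|_E \cdot \tilde L|_E)_E > 0$.

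The heart of the matter — and the step I expect to be the main obstacle — is the inequality $(\tilde{\mcH} \cdot \tilde S \cdot \tilde L) \ge 0$, where condition (1) enters decisively. Taking a general $H \in \mcH$, I would decompose the effective $1$-cycle $\tilde S \cdot \tilde L$ into the part that varies as $L$ moves in $\mcL$ and the fixed part supported on $\tilde S \cap \Bs\tilde{\mcL}$. The moving part sweeps out a family on $\tilde S$, so for general $L$ it has no component contained in the fixed curve $\Bs\tilde{\mcH}$ and hence pairs nonnegatively with $\tilde H$. The delicate point is the fixed part: a component of $\tilde S \cap \Bs\tilde{\mcL}$ lying in $\Bs\tilde{\mcH}$ could a priori contribute negatively. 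Here condition (1) guarantees that no component of $S \cap \Bs\mcL$ passes through $\msp$, so no such fixed component meets $E$; this confines any potential negativity away from the exceptional divisor and, together with the effectivity of $\tilde{\mcH}$ and a careful accounting of the components of $\tilde S \cdot \tilde L$, yields $(\tilde{\mcH} \cdot \tilde S \cdot \tilde L) \ge 0$. With both right-hand terms controlled, the master identity gives
\[
0 \ge n\,(-K_Y \cdot \tilde S \cdot \tilde L) = (\tilde{\mcH} \cdot \tilde S \cdot \tilde L) + \mu\,(E \cdot \tilde S \cdot \tilde L) \ge \mu\,(E \cdot \tilde S \cdot \tilde L) > 0,
\]
a contradiction. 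Hence $\msp$ is not a maximal center.
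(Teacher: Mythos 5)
Your strategy is sound and genuinely different from the paper's. The paper never intersects against the hypothetical mobile system directly: it first reduces to the case where $S$ is prime, writes $\mcL$ as a pencil $\{L_\lambda\}$, and for general $\lambda$ extracts from the $1$-cycle $\tilde S\cdot\tilde L_\lambda$ an irreducible component $\tilde C^\circ_\lambda$ with $(-K_Y\cdot\tilde C^\circ_\lambda)\le 0$ and $(E\cdot\tilde C^\circ_\lambda)>0$; condition (1) is used only to guarantee that these curves are pairwise distinct as $\lambda$ varies, and the conclusion is delegated to the ``infinitely many curves'' criterion of \cite[Lemma 2.20]{OkadaII}. Your argument internalizes that criterion: the identity $n(-K_Y)=\tilde{\mcH}+\mu E$ with $\mu>0$ is exactly the mechanism behind that lemma, and you apply it to the whole cycle $\tilde S\cdot\tilde L$ for a single general $L$ rather than to infinitely many individual components. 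What you gain is a self-contained proof; what the paper gains is that the bookkeeping is outsourced to a quoted lemma.

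That said, the step you flag as the main obstacle is closed by an observation you never quite state, and the heuristic you offer in its place (``confines any potential negativity away from the exceptional divisor'') is not a reason: a negative contribution away from $E$ would sink the inequality just as surely as one near it. The correct point is that away from $E$ there is no negativity at all. For an irreducible curve $C\subset X$ with proper transform $\tilde C\not\subset E$ one has
\[
(\tilde{\mcH}\cdot\tilde C)=n\,(-K_X\cdot C)-m\,(E\cdot\tilde C),
\]
so if $C$ does not pass through $\msp$ then $(E\cdot\tilde C)=0$ and $(\tilde{\mcH}\cdot\tilde C)=n\,(-K_X\cdot C)>0$, regardless of whether $C$ lies in the base locus of $\mcH$. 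Hence the only components of $\tilde S\cdot\tilde L$ that could pair negatively with $\tilde{\mcH}$ are proper transforms of curves through $\msp$; such a component is either moving with $L$ (hence, for general $L$, not among the finitely many curves of $\Bs\tilde{\mcH}$, so it pairs nonnegatively) or fixed, i.e.\ contained in $S\cap\Bs\mcL$ and through $\msp$, which (1) forbids. Finally, the part of $\tilde S\cdot\tilde L$ supported on $E$ --- which your two-part decomposition omits --- also pairs nonnegatively, since $\tilde{\mcH}|_E=-m\,E|_E$ is ample on $E$. With these three cases in place your master identity does deliver the contradiction.
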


\begin{proof}
We write $D \sim - n K_X$.
Write $S = \sum m_i S_i + T$, where $m_i > 0$, $S_i$ is a prime divisor and $T$ is an effective divisor which does not pass through $\msp$.
We have $T \sim - l K_X$ for some $l \ge 0$ and
\[
(-K_Y \cdot \tilde{T} \cdot \tilde{L}) =  n l (-K_X)^3 \ge 0.
\]
Since
\[
0 \ge (-K_Y \cdot \tilde{S} \cdot \tilde{L}) 
= \sum m_i (-K_Y \cdot \tilde{S}_i \cdot \tilde{L}) + (-K_Y \cdot \tilde{T} \cdot \tilde{L}),
\]
there is a component $S_i$ for which $(-K_Y \cdot \tilde{S}_i \cdot \tilde
{L}) \le 0$.
Since $\msp \in S_i \cap \Bs \mcL \subset S \cap \Bs \mcL$, we may assume that $S$ is a prime divisor by replacing $S$ by $S_i$.

Write $\mcL = \{L_{\lambda} \mid \lambda \in \mbP^1\}$.
For $\lambda \in \mbP^1$, we write $S \cdot L_{\lambda} = \sum_i c_i C_{\lambda, i}$, where $c_i \ge 0$ and $C_{\lambda,i}$ is an irreducible and reduced curve on $X$.
For a curve or a divisor $\Delta$ on $X$, we denote by $\tilde{\Delta}$ its proper transform on $Y$.
Then,
\[
\tilde{S} \cdot \tilde{L}_{\lambda} = \sum_i c_i \tilde{C}_{\lambda,i} + \Xi,
\]
where $\Xi$ is an effective $1$-cycle supported on $E$.
Since any component of $\Xi$ is contracted by $\varphi$ and $-K_Y$ is $\varphi$-ample, we have $(-K_Y \cdot \Xi) \ge 0$.
Thus, for a general $\lambda \in \mbP^1$, we have
\[
0 \ge (-K_Y \cdot \tilde{S} \cdot \tilde{L}_{\lambda}) \ge \sum_i c_i (-K_Y \cdot \tilde{C}_{\lambda,i}).
\]
It follows that $(-K_Y \cdot \tilde{C}_{\lambda,i}) \le 0$ for some $i$.
We choose such a $\tilde{C}_{\lambda,i}$ and denote it as $\tilde{C}^{\circ}_{\lambda}$. 
By the assumption (1), the set 
\[
\{\tilde{C}^{\circ}_{\lambda} \mid \text{$\lambda \in \mbP^1$ is general}\}
\]
consists of infinitely many distinct curves.
We have $(-K_Y \cdot \tilde{C}^{\circ}_{\lambda}) \le 0$ by the construction.
We see that $(E \cdot \tilde{C}_{\lambda}^{\circ}) > 0$ since $\tilde{C}_{\lambda}$ is the proper transform of a curve passing through $\msp$.
Therefore $\msp$ is not a maximal center by \cite[Lemma 2.20]{OkadaII}.
\end{proof}

\subsection{Fano varieties in a weighted projective space}

Let $\mbP = \mbP (a_0,\dots,a_n)$ be a weighted projective space with homogeneous coordinates $x_0,\dots,x_n$ of $\deg x_i = a_i$.
We assume that $\mbP$ is well formed, that is, 
\[
\gcd \{\, a_i \mid 0 \le i \le n, i \ne j \,\} = 1
\]
for $j = 0, 1, \dots, n$.
Throughout the present subsection, let $X \subset \mbP$ be a normal projective $3$-fold defined by the equations
\[
F_1 = F_2 = \cdots = F_N = 0,
\]
where $F_i \in \mbC [x_0,\dots,x_{n+1}]$ is a homogeneous polynomial of degree $d_i$ with respect to the grading $\deg x_i = a_i$.

\begin{Def}
We say that $X$ is {\it quasi-smooth} if the affine cone
\[
(F_1 = F_2 = \cdots = F_N = 0) \subset \mbA^{n+1} = \Spec \mbC [x_0,\dots,x_n]
\]
is smooth outside the origin. 
\end{Def}

In the following we assume that $X$ is a quasi-smooth prime Fano $3$-fold.
For $0 \le i \le n$, we define $\msp_{x_i} = (0\!:\!\cdots\!:\!1\!:\!\cdots\!:\!0) \in \mbP$, where the unique $1$ is in the $(i+1)$st position, and we define $D_i = (x_i = 0) \cap X$ which is a Weil divisor such that $D_i \sim - a_i K_X$.

\begin{Lem} \label{lem:exclcurve}
If $(-K_X)^3 \le 1$, then no curve on $X$ is a maximal center.
\end{Lem}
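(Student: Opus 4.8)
The plan is to argue by contradiction, combining the Noether--Fano inequality for a curve center with an intersection computation on $X$. Suppose that some irreducible curve $\Gamma \subset X$ is a maximal center, so that there is a movable linear system $\mcH \sim_{\mbQ} -n K_X$ and a maximal singularity $\varphi \colon (E \subset Y) \to (\Gamma \subset X)$ realizing it. Since $X$ has only terminal, hence isolated, singularities, $X$ is smooth at the generic point of $\Gamma$; thus along $\Gamma$ the extraction is the ordinary blowup of the curve, with discrepancy $a_E (K_X) = 1$ and with $m_E (\mcH) = \mult_\Gamma \mcH$. The maximal singularity condition $a_E (K_X) / m_E (\mcH) < 1/n$ then reads $\mult_\Gamma \mcH > n$.

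First I would convert this multiplicity bound into a numerical inequality. Choosing two general members $M_1, M_2 \in \mcH$, which share no component because $\mcH$ is movable, the intersection $M_1 \cdot M_2$ is an effective $1$-cycle numerically equivalent to $n^2 (-K_X)^2$, and the multiplicity of $\Gamma$ in it is at least $(\mult_\Gamma \mcH)^2$. Writing $M_1 \cdot M_2 = (\mult_\Gamma \mcH)^2\, \Gamma + \Delta$ with $\Delta$ an effective $1$-cycle, and intersecting with the nef divisor $-K_X$, I obtain
\[
n^2 (-K_X)^3 = (-K_X \cdot M_1 \cdot M_2) \ge (\mult_\Gamma \mcH)^2 (-K_X \cdot \Gamma) > n^2 (-K_X \cdot \Gamma).
\]
Dividing by $n^2$ and invoking the hypothesis gives $(-K_X \cdot \Gamma) < (-K_X)^3 \le 1$. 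This part is routine and robust, even when $\Gamma$ passes through the finitely many singular points of $X$, since every term above is nonnegative.

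To finish I would contradict $(-K_X \cdot \Gamma) < (-K_X)^3$ by bounding the anticanonical degree of curves from below. Because $\rank \Pic (X) = 1$ and $\Cl (X) \cong \mbZ$ is generated by $-K_X$, the cone $\bNE (X)$ is a single ray on which $(-K_X)^2$ lies, and $(-K_X \cdot \Gamma)$ is governed by the position of $[\Gamma]$ along this ray. The contradiction I am after amounts to the statement that $(-K_X)^2$ represents the minimal effective $1$-cycle class, i.e. $(-K_X \cdot \Gamma) \ge (-K_X)^3$ for every irreducible curve $\Gamma$ on $X$.

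The hard part will be exactly this last lower bound. In a weighted projective space $-K_X = \mcO_X (1)$ is not very ample, so a curve can in principle have $(-K_X \cdot \Gamma) = 1/\lcm (a_i, a_j) < 1$, for example a coordinate line through two vertices $\msp_{x_i}, \msp_{x_j}$; such curves are precisely the ones the crude intersection estimate fails to exclude. Ruling them out is where quasi-smoothness, primality, and the explicit weights must enter: I would intersect a putative low-degree $\Gamma$ with the coordinate divisors $D_i \sim - a_i K_X$ and analyze the local contributions at the terminal quotient points through which such a curve is forced to pass, showing that no irreducible curve on $X$ has anticanonical degree below $(-K_X)^3$. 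This local, weight-dependent analysis is the genuine obstacle; once it is in place the lemma follows immediately.
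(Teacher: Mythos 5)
Your reduction to the inequality $(-K_X \cdot \Gamma) < (-K_X)^3 \le 1$ is correct and is exactly the first half of the argument the paper relies on (the paper only cites \cite[Lemma 2.1]{AO}, whose proof runs along these lines). The gap is in the second half. You propose to conclude by proving that \emph{every} irreducible curve on $X$ satisfies $(-K_X \cdot \Gamma) \ge (-K_X)^3$, and you correctly observe that the only problematic curves are those meeting the quotient singular points, where $-K_X$ fails to be Cartier; but you then defer the entire proof of this lower bound to an unspecified ``local, weight-dependent analysis.'' That is not a proof, and it is also not the right statement to aim for: the degree of a curve through singular points lies a priori only in $\frac{1}{m}\mbZ_{>0}$ with $m$ the l.c.m.\ of the relevant local indices, so the bound $(-K_X\cdot\Gamma)\ge(-K_X)^3$ would require comparing $(-K_X)^3$ with $1/m$ --- extra numerical input that is not part of the hypothesis $(-K_X)^3\le 1$ and is not available in the generality in which the lemma is stated.

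The missing idea is Kawamata's theorem \cite{Kawamata}: if $\msp\in X$ is a terminal quotient singular point of index $r\ge 2$, then the only extremal divisorial extraction whose center contains $\msp$ is the Kawamata blowup of $\msp$ itself; in particular no extremal divisorial extraction is centered along a curve passing through such a point. Since $X$ is quasi-smooth, all of its singular points are terminal quotient singularities of index $\ge 2$, so a curve $\Gamma$ that is a maximal center must be disjoint from $\Sing X$. Then $-K_X$ is Cartier in a neighborhood of $\Gamma$, hence $(-K_X\cdot\Gamma)$ is a positive integer, contradicting $(-K_X\cdot\Gamma)<1$. With this observation your first computation already closes the proof; without it the argument does not terminate, and the route you sketch for the last step would fail on curves of degree $1/r$ through an index-$r$ point whenever $1/r<(-K_X)^3$.
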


\begin{proof}
The same proof of \cite[Lemma 2.1]{AO} applies in this setting without any change.
\end{proof}

\begin{Lem} \label{lem:exclnonsingpt}
Assume that $a_0 \le a_1 \le \dots \le a_n$.
If $a_{n-1} a_n (-K_X)^3 \le 4$, then no nonsingular point of $X$ is a maxinal center.
\end{Lem}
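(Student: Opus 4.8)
The plan is to argue by contradiction by means of the $4n^2$-inequality, in the same spirit as the companion statement Lemma \ref{lem:exclcurve} for curves. Suppose that a nonsingular point $\msp = (p_0 \colon \cdots \colon p_n) \in X$ is a maximal center, so that there is a movable linear system $\mcH \sim_{\mbQ} -nK_X$ with $c (X, \mcH) < 1/n$ whose associated maximal singularity has center $\msp$. Since $\msp$ is a smooth point of the $3$-fold $X$ and $(X, \tfrac{1}{n} \mcH)$ is not canonical at $\msp$, the $4n^2$-inequality (see \cite{CPR}) applies: for general members $H_1, H_2 \in \mcH$ the effective $1$-cycle $\Gamma := H_1 \cdot H_2$ satisfies
\[
\mult_{\msp} \Gamma > 4 n^2,
\qquad (-K_X \cdot \Gamma) = n^2 (-K_X)^3 .
\]
The task is then to bound $\mult_{\msp} \Gamma$ from above by an intersection number of the expected size.

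To this end I would produce an effective divisor $S$ through $\msp$ of degree at most $a_{n-1} a_n$ meeting $\Gamma$ suitably. Choose an index $j$ with $p_j \neq 0$ and any index $i \neq j$. The monomials $x_i^{a_j}$ and $x_j^{a_i}$ both have degree $a_i a_j$ and vanish simultaneously only along $(x_i = x_j = 0)$, which does not contain $\msp$; hence $\msp \notin \Bs |\mcO_X (a_i a_j)|$, and a general divisor $S$ through $\msp$ in this system is defined, with $S \sim -a_i a_j K_X$. Because $i \neq j$, at least one of the indices is $\le n-1$, so $a_i a_j \le a_{n-1} a_n$. Writing $\Gamma_{\msp}$ for the union of the components of $\Gamma$ through $\msp$ and granting that $S$ contains none of them, I obtain
\[
4 n^2 < \mult_{\msp} \Gamma \le (S \cdot \Gamma_{\msp}) = a_i a_j (-K_X \cdot \Gamma_{\msp}) \le a_{n-1} a_n \, n^2 (-K_X)^3 \le 4 n^2 ,
\]
where the penultimate step uses $a_i a_j \le a_{n-1} a_n$ together with $(-K_X \cdot \Gamma_{\msp}) \le (-K_X \cdot \Gamma) = n^2 (-K_X)^3$, and the last is the hypothesis $a_{n-1} a_n (-K_X)^3 \le 4$. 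This contradiction shows that $\msp$ is not a maximal center.

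The only delicate point — and the step I expect to be the main obstacle — is the clause that $S$ can be taken to contain no component of $\Gamma_{\msp}$. Since $\Bs |\mcO_X (a_i a_j)| \subseteq (x_i = x_j = 0)$ avoids $\msp$, no component of $\Gamma$ through $\msp$ lies in that base locus; what must be ruled out is that every divisor of $|\mcO_X (a_i a_j)|$ through $\msp$ contains some such component, i.e. that the component is contracted to the image of $\msp$ under the rational map defined by the system. I would dispose of this by exploiting the abundance of degree-$a_i a_j$ monomials and, when needed, varying the admissible pair $(i,j)$; since there are only finitely many components of $\Gamma_{\msp}$ and the bound $a_i a_j \le a_{n-1} a_n$ holds for every admissible pair, a general member through $\msp$ will avoid all of them. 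Once $S$ is in hand the remainder is the purely numerical chain displayed above.
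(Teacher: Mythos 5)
Your overall strategy --- the $4n^2$-inequality at a smooth maximal center combined with an auxiliary divisor of degree at most $a_{n-1}a_n$ through $\msp$ --- is exactly the argument the paper invokes (its proof simply defers to \cite[Lemma 2.6]{AO}). The numerical chain at the end is correct \emph{provided} the divisor $S$ exists, and you have correctly located the one delicate step; but your treatment of that step has a genuine gap. For a \emph{fixed} pair $(i,j)$, the base locus of the system $\mcL_{ij}$ of degree-$a_ia_j$ divisors through $\msp$ is not controlled by the two monomials $x_i^{a_j}, x_j^{a_i}$ alone: it contains the whole fibre of $\msp$ under the map defined by \emph{all} monomials of degree $a_ia_j$, and this fibre can be positive-dimensional when $\msp$ lies on a coordinate stratum on which few monomials of that degree survive (for instance, if $\msp \in (x_0=0)$ in a $\mbP(1,2,3,\dots)$ and $a_ia_j = 6$, the degree-$6$ monomials restricted to $(x_0=0)$ may reduce to $x_1^3, x_2^2$, and the associated map contracts curves through $\msp$). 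So a general member of $\mcL_{ij}$ need not avoid a given component of $\Gamma_{\msp}$, and since different components may require different pairs, no single $S$ of degree $\le a_{n-1}a_n$ with the stated property need exist. As written, your displayed inequality, which uses one $S$ against all of $\Gamma_{\msp}$, is therefore not justified.

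There are two standard repairs, either of which completes your proof. First, run the estimate component by component: for $p_j \ne 0$ the sections $g_i := p_j^{a_i}x_i^{a_j} - p_i^{a_j}x_j^{a_i}$ ($i \ne j$), of degrees $a_ia_j \le a_{n-1}a_n$, have finite common zero locus in $\mbP$ (on $x_j \ne 0$ they determine each $x_i$ up to roots of unity; on $x_j = 0$ they force all $x_i = 0$). Hence every component $C$ of $\Gamma_{\msp}$ fails to lie in some $(g_{i_C}=0)$, and
\[
\mult_{\msp} \Gamma = \sum_{C \ni \msp} \gamma_C \, \mult_{\msp} C \le \sum_{C \ni \msp} \gamma_C \bigl( (g_{i_C}=0)|_X \cdot C \bigr) \le a_{n-1}a_n \, (-K_X \cdot \Gamma) \le 4 n^2,
\]
contradicting the $4n^2$-inequality. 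Second, and this is what \cite[Lemma 2.6]{AO} actually does, use the isolating-class formalism of \cite[Section 5.6]{CPR}: since the $g_i$ isolate $\msp$, for $s$ sufficiently divisible there is a single $S \sim -s\,a_{n-1}a_n K_X$ with $\mult_{\msp} S \ge s$ containing no component of $\Gamma_{\msp}$, and the contradiction follows from $s \cdot \mult_{\msp}\Gamma \le (S \cdot \Gamma_{\msp})$. Either way, the missing ingredient is the finiteness of the common zero locus of the $g_i$ over all $i \ne j$, not genericity within a single degree-$a_ia_j$ system.
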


\begin{proof}
The proof is almost identical to that of \cite[Lemma 2.6]{AO}.
\end{proof}

\begin{Def}
Let $\mcC \subset \{x_0,\dots,x_n\}$ be a set of homogeneous coordinates.
We define
\[
\begin{split}
\Pi (\mcC) &:= \bigcap_{z \in \mcC} (z = 0) \subset \mbP, \\
\Pi_X (\mcC) &:= \Pi (\mcC) \cap X \subset X.
\end{split}
\]
We also denote
\[
\Pi (\mcC) = \Pi (x_{i_1}, \dots, x_{i_m}), \quad
\Pi_X (\mcC) = \Pi_X (x_{i_1}, \dots, x_{i_m}),
\]
when $\mcC = \{x_{i_1}, \dots, x_{i_m}\}$.
\end{Def}

\begin{Lem} \label{lem:exclcri1/2pt}
Let $\msp \in X$ be a singular point of type $\frac{1}{2} (1,1,1)$ and let
\[
b := \max \{\, a_i \mid 0 \le i \le n, \text{$a_i$ is odd}\,\}.
\]
If $2 b (-K_X)^3 \le 1$, then $\msp$ is not a maximal center. 
\end{Lem}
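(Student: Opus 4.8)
The plan is to produce an explicit nef divisor on the Kawamata blowup $Y$ and feed it into Lemma~\ref{lem:exclsingNE}. Write $\varphi\colon(E\subset Y)\to(\msp\in X)$ for the Kawamata blowup and set $A:=-\varphi^*K_X$. Since $\msp$ is of type $\frac{1}{2}(1,1,1)$ we have $K_Y=\varphi^*K_X+\frac{1}{2}E$ and $(E^3)=4$. First I would record the intersection numbers on $Y$: because $A$ is pulled back from $X$ and $E$ is contracted to the point $\msp$, the projection formula gives $(A^2\cdot E)=(A\cdot E^2)=0$, while $(A^3)=(-K_X)^3$. Then, for the $\mbQ$-divisor $M_\lambda:=-\varphi^*K_X-\lambda E=A-\lambda E$, a direct expansion yields
\[
\bigl(M_\lambda\cdot(-K_Y)^2\bigr)=\Bigl((A-\lambda E)\cdot\bigl(A-\frac{1}{2}E\bigr)^2\Bigr)=(-K_X)^3-\lambda,
\]
the mixed terms vanishing and $(E^3)=4$ contributing the last term. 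Consequently, as soon as $M_\lambda$ is nef for some $\lambda\ge(-K_X)^3$, the class $(-K_Y)^2$ pairs nonpositively with the nonzero nef divisor $M_\lambda$ and hence lies outside $\Int\bNE(Y)$, so Lemma~\ref{lem:exclsingNE} excludes $\msp$.

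The heart of the argument is to show that $M_\lambda$ is nef for all $\lambda$ up to $\frac{1}{2b}$, which by the hypothesis $2b(-K_X)^3\le1$ is at least $(-K_X)^3$. For this I would apply Lemma~\ref{lem:findnef} to coordinate divisors $D_i=(x_i=0)\cap X$, for which $D_i\sim-a_iK_X$, so that $n_i=a_i$. The key point is that any coordinate $x_i$ of odd degree $a_i$ is an odd semi-invariant for the $\mu_2$-action at $\msp$, so its restriction to $X$ has lowest-weight term of odd degree at least $1$ in the orbifold coordinates; hence $\ord_E(D_i)=\ord_E(x_i|_X)\ge\frac{1}{2}$ and the vanishing ratio satisfies $\ord_E(D_i)/a_i\ge\frac{1}{2a_i}\ge\frac{1}{2b}$. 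I would then choose three odd-degree coordinates $x_{i_1},x_{i_2},x_{i_3}$ whose restrictions form a system of local orbifold coordinates at $\msp$, for which the leading terms are linear and the order along $E$ is exactly $\frac{1}{2}$. For this triple, $D_{i_1}\cap D_{i_2}\cap D_{i_3}$ equals $\{\msp\}$ in a neighbourhood of $\msp$, hence contains no curve through $\msp$, and Lemma~\ref{lem:findnef} gives that $M_\lambda$ is nef for $0\le\lambda\le e$, where $e=\min_j\ord_E(D_{i_j})/a_{i_j}\ge\frac{1}{2b}$.

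Combining the two steps, take $\lambda=(-K_X)^3$, which satisfies $\lambda\le\frac{1}{2b}\le e$ by hypothesis; then $M_\lambda$ is a nonzero nef divisor with $(M_\lambda\cdot(-K_Y)^2)=0$, whence $(-K_Y)^2\notin\Int\bNE(Y)$ and $\msp$ is not a maximal center by Lemma~\ref{lem:exclsingNE}. I expect the main obstacle to lie in the selection of the three odd-degree coordinate divisors: one must check that three of the orbifold coordinates at $\msp$ can be represented by genuine coordinate functions (rather than combinations eliminated by the defining equations $F_1,\dots,F_N$), so that their common zero locus is $0$-dimensional at $\msp$ and each has order exactly $\frac{1}{2}$ along $E$. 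This rests on the local structure of the quasi-smooth $X$ at the $\frac{1}{2}(1,1,1)$ point together with the orbifold-chart description of the Kawamata blowup; once these divisors are identified, the order bound and the intersection computation are routine.
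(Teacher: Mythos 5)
Your proposal follows the same route as the paper: produce the nef divisor $-\varphi^*K_X-\lambda E$ with $\lambda=\tfrac{1}{2b}$ via Lemma~\ref{lem:findnef} applied to odd-degree coordinate divisors, compute $\bigl((-\varphi^*K_X-\lambda E)\cdot(-K_Y)^2\bigr)=(-K_X)^3-\lambda\le 0$, and conclude by Lemma~\ref{lem:exclsingNE}; the intersection numbers and the bound $\ord_E(D_i)\ge\tfrac12$ for odd $a_i$ (the paper phrases this as $2D_i$ being Cartier through $\msp$, equivalent to your semi-invariance argument) are all correct. The one place you diverge is exactly the step you flag as the main obstacle: you want to select \emph{three} odd-degree coordinates whose restrictions give local orbifold coordinates at $\msp$, so that their common zero locus is finite near $\msp$. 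This can indeed be completed (the differentials of the odd-degree coordinates span the $\mu_2$-anti-invariant part of the cotangent space of the affine cone at a point over $\msp$, which is $3$-dimensional, so three of them can be chosen independent), but it is an unnecessary complication. The paper sidesteps it entirely by feeding \emph{all} odd-degree coordinates into Lemma~\ref{lem:findnef}: their common zero locus is the fixed locus of $\mu_2\subset\mbC^*$ intersected with $X$, hence is contained in the singular locus of $X$ and is therefore a finite set of points, with no further local analysis needed. Since each odd $a_i$ satisfies $a_i\le b$, the resulting $e$ is still at least $\tfrac{1}{2b}$, and the rest of your argument goes through verbatim. So the skeleton is right; replace your coordinate-selection step by this global observation and the proof is complete.
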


\begin{proof}
Let $\mcC = \{ x_{i_1}, \dots, x_{i_m}\}$ be the set of homogeneous coordinates of odd degree.
The set $\Pi_X (\mcC) = D_{i_1} \cap \cdots \cap D_{i_m}$ consists of singular points since $X$ is quasi-smooth and has only terminal quotient singularities (which are isolated), 
In particular $\Pi_X (\mcC)$ is a finite set of points.
Let $\varphi \colon (E \subset Y) \to (\msp \in X)$ be the Kawamata blowup. 
Then $\ord_E (D_{i_j}) \ge 1/2$ since $2 D_{i_j}$ is a Cartier divisor passing through $\msp$ and thus $- b \varphi^* K_X - \frac{1}{2} E$ is nef by Lemma \ref{lem:findnef}.
We have
\[
(-b \varphi^* K_X - \frac{1}{2} E) \cdot (-K_Y)^2 = b (-K_X)^3 - \frac{1}{2} \le 0.
\]
This shows that $(-K_Y)^2 \notin \bNE (Y)$ and $\msp$ is not a maximal center by Lemma \ref{lem:exclsingNE}.
\end{proof}

\begin{Def}
Let $\msp = \msp_{x_k} \in X$ be a terminal quotient singular point of type $\frac{1}{a_k} (1, c, a_k-c)$ for some $c$ with $1 \le c \le a_k/2$.
We define
\[
\ivr_{\msp} (\mcC) := \min_{1 \le j \le m} \left\{\frac{\overline{a_{i_j}}}{a_{i_j} a_k} \right\},
\]
where $\mcC = \{x_{i_1}, \dots, x_{i_m}\}$ and $\overline{a_{j_i}}$ is the integer such that $1 \le \overline{a_{j_i}} \le a_k$ and $\overline{a_{j_i}}$ is congruent to $a_{j_i}$ modulo $a_k$, and call it the {\it initial vanishing ratio} of $\mcC$ at $\msp$.
\end{Def}

\begin{Def}
For a terminal quotient singularity $\msp$ of type $\frac{1}{r} (1,a,r-a)$, we define
\[
\wprod (\msp) := a (r-a),
\]
and call it the {\it weight product} of $\msp$.
\end{Def}

\begin{Lem} \label{lem:exclcrisingpt}
Let $\msp = \msp_{x_k} \in X$ be a terminal quotient singular point.
Suppose that there exists a subset $\mcC \subset \{x_0,\dots,x_n\}$ satisfying the following properties.
\begin{enumerate}
\item $\msp \in \Pi_X (\mcC)$, or equivalently $x_k \notin \mcC$.
\item $\Pi_X (\mcC \cup \{x_k\}) = \emptyset$.
\item $\ivr_{\msp} (\mcC) \ge \wprod (\msp) (-K_X)^3$.
\end{enumerate} 
Then $\msp$ is not a maximal center.
\end{Lem}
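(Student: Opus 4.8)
The plan is to mimic the proof of Lemma \ref{lem:exclcri1/2pt}: use the divisors cut out by the coordinates in $\mcC$ to produce an explicit nef divisor on the Kawamata blowup via Lemma \ref{lem:findnef}, and then apply the test of Lemma \ref{lem:exclsingNE}. Write $\mcC = \{x_{i_1},\dots,x_{i_m}\}$, let $r = a_k$, and let $\varphi \colon (E \subset Y) \to (\msp \in X)$ be the Kawamata blowup. By condition (1) each divisor $D_{i_j} = (x_{i_j}=0)\cap X \sim_{\mbQ} -a_{i_j}K_X$ passes through $\msp$. First I would check that $\Pi_X(\mcC) = D_{i_1}\cap\cdots\cap D_{i_m}$ contains no curve through $\msp$, so that Lemma \ref{lem:findnef} is applicable: if $C$ were such a curve, then $C \subset \Pi(\mcC)$ is a complete curve on which $x_k$ is not identically zero (since $x_k(\msp)\neq 0$), so the section $x_k$ must vanish at some point $q \in C$; as all coordinates of $\mcC$ vanish along $C$, this gives $q \in \Pi_X(\mcC\cup\{x_k\})$, contradicting condition (2).

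The technical heart is bounding the order $\ord_E(D_{i_j})$ from below, and this is where I expect the real work to lie. In the orbifold chart at $\msp$ the (dehomogenized) coordinate $x_{i_j}$ is a semi-invariant of weight $\overline{a_{i_j}}$ modulo $r$ for the $\tfrac1r(1,c,r-c)$-action, vanishing at $\msp$. Since the Kawamata blowup assigns to a monomial the value $1/r$ times its weighted order, and every monomial of character $\overline{a_{i_j}}$ has weighted order at least $\overline{a_{i_j}}$, I get $\ord_E(D_{i_j}) \ge \overline{a_{i_j}}/r$. Hence the vanishing ratio satisfies $\ord_E(D_{i_j})/a_{i_j} \ge \overline{a_{i_j}}/(a_{i_j}r)$, and taking the minimum over $j$ yields
\[
e := \min_{1\le j\le m}\{\ord_E(D_{i_j})/a_{i_j}\} \ge \ivr_{\msp}(\mcC) > 0.
\]
By Lemma \ref{lem:findnef}, $M := -\varphi^*K_X - \ivr_{\msp}(\mcC)\,E$ is then nef, since $0 < \ivr_{\msp}(\mcC) \le e$.

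It remains a routine intersection computation. Using $K_Y = \varphi^*K_X + \tfrac1r E$, the projection-formula vanishings $(\varphi^*K_X)^2\cdot E = \varphi^*K_X\cdot E^2 = 0$, the identities $(-\varphi^*K_X)^3 = (-K_X)^3$ and $(E^3) = r^2/(c(r-c))$, I obtain
\[
M\cdot(-K_Y)^2 = (-K_X)^3 - \frac{\ivr_{\msp}(\mcC)}{c(r-c)} = (-K_X)^3 - \frac{\ivr_{\msp}(\mcC)}{\wprod(\msp)}.
\]
Condition (3) is precisely $\ivr_{\msp}(\mcC)/\wprod(\msp) \ge (-K_X)^3$, so $M\cdot(-K_Y)^2 \le 0$. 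Since $M$ is nef and nonzero, this forces $(-K_Y)^2 \notin \Int\bNE(Y)$, and Lemma \ref{lem:exclsingNE} then shows that $\msp$ is not a maximal center. The only genuinely delicate point is the local bound $\ord_E(D_{i_j}) \ge \overline{a_{i_j}}/r$, which hinges on correctly reading off the weight of $x_{i_j}$ modulo $r$ and controlling the lowest-order term of its orbifold expansion under the Kawamata weighting; everything else is bookkeeping parallel to the $\tfrac12(1,1,1)$ case.
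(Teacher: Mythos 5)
Your proposal is correct and follows essentially the same route as the paper: rule out curves in $\Pi_X(\mcC)$ using condition (2) and the ampleness of $D_k$, invoke the bound $\ord_E(D_{i_j})\ge \overline{a_{i_j}}/a_k$ to apply Lemma \ref{lem:findnef} with $\lambda=\ivr_{\msp}(\mcC)$, and conclude via the intersection computation and Lemma \ref{lem:exclsingNE}. The only cosmetic difference is that you fill in the justification of the order bound that the paper simply cites from \cite[Section 3]{AO}.
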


\begin{proof}
We write $\mcC = \{x_{i_1}, \dots, x_{i_m}\}$.
We claim that $\Pi_X (\mcC) = D_{i_1} \cap \cdots \cap D_{i_m}$ is a finite set of points.
Indeed, if it contains a curve, then $\Pi_X (\mcC \cup \{x_k\}) = \Pi_X (\mcC) \cap D_k \ne \emptyset$ since $D_k$ is an ample divisor on $X$.
This is impossible by the assumption (2).
Note that we have $\ord_E (D_{i_j}) \ge \overline{a_{i_j}}/a_k$ (cf.\ \cite[Section 3]{AO}) so that
\[
e := \min \{\, \ord_E (D_{i_j})/a_{i_j} \mid 1 \le j \le m \,\} \ge \ivr_{\msp} (\mcC).
\]
By Lemma \ref{lem:findnef}, $- \varphi^* K_X - \ivr_{\msp} (\mcC) E$ is nef and we have
\[
(- \varphi^* K_X - \ivr_{\msp} (\mcC) E) (-K_Y)^2 = (-K_X)^3 - \frac{\ivr_{\msp} (\mcC)}{\wprod (\msp)} \le 0
\]
by the assumption (3).
Therefore $(-K_Y)^2 \notin \bNE (Y)$ and $\msp$ is not a maximal center.
\end{proof}

Let $\msp \in X$ be a singular point such that it can be transformed to $\msp_{x_k}$ by a change of coordinates.
For simplicity of the description we assume $\msp = \msp_{x_0}$ and we set $r = a_0 > 1$.
Let $\varphi \colon (E \subset Y) \to (\msp \in X)$ be the Kawamata blowup.
We explain a systematic way to estimate $\ord_E (x_i)$ for coordinates $x_i$ and also an explicit description of $\varphi$.
It is a consequence of the quasi-smoothness of $X$ that after re-numbering the defining equation we can write
\[
F_l = \alpha_l x_k^{m_l} x_{i_l} + (\text{other terms}), \quad \text{for $1 \le l \le n-3$},
\]
where $\alpha_l \in \mbC \setminus \{0\}$, $m_l$ is a positive integer and $x_0, x_{i_1}, \dots, x_{i_{n-3}}$ are mutually distinct so that by denoting the other $3$ coordinates as $x_{j_1}, x_{j_2}, x_{j_3}$ we have
\[
\{x_0, x_{i_1},\dots,x_{i_{n-3}}, x_{j_1}, x_{j_2}, x_{j_3} \} = \{x_0,\dots,x_n\}.
\]
In this case we can choose $x_{j_1}, x_{j_2}, x_{j_3}$ as local orbi-coordinates of $X$ at $\msp$ and the singular point $\msp$ is of type 
\[
\frac{1}{r} (a_{j_1}, a_{j_2}, a_{j_3}).
\]

\begin{Def}[{\cite[Definitions 3.6, 3.7]{AO}}]
For an integer $a$, we denote by $\bar{a}$ the positive inter such that $\bar{a} \equiv a \pmod{r}$ and $0 < \bar{a} \le r$.
We say that
\[
\wt (x_1,\dots,x_n) = \frac{1}{r} (b_1,\dots,b_n)
\]
is an {\it admissible weight} at $\msp$ if $b_i \equiv a_i \pmod{r}$ for any $i$.

For an admissible weight $\wt$ at $\msp$ and a polynomial $f = f (x_0,\dots,x_n)$, we denote by $f^{\wt}$ the lowest weight part of $f$, where we assume that $\wt (x_0) = 0$.

We say that an admissible weight $\wt$ at $\msp$ satisfies the {\it KBL condition} if $x_0^{e_l} x_{i_l} \in F_l^{\wt}$ for $1 \le l \le n-3$ and
\[
(b_{j_1}, b_{j_2}, b_{j_3}) = (\overline{a_{j_1}}, \overline{a_{j_2}}, \overline{a_{j_3}}).
\]
\end{Def}

Let $\wt (x_1,\dots,x_n) = \frac{1}{r} (b_1,\dots,b_n)$ be an admissible weight at $\msp$ satisfying the KBL condition.
We denote by $\Phi_{\wt} \colon Q_{\wt} \to \mbP$ at $\msp$ with weight $\wt$, and by $Y_{\wt}$ the proper transform of $X$ via $\Phi_{\wt}$.
Then the induced morphism $\varphi_{\wt} = \Phi_{\wt}|_{Y_{\wt}} \colon Y_{\wt} \to X$ coincides with the Kawamata blowup at $\msp$.
From this we see that the exceptional divisor $E$ is isomorphic to 
\[
E_{\wt} := (f_1 = \cdots = f_{n-3} = 0) \subset \mbP (b_1,\dots,b_n),
\]
where $f_l = F_l^{\wt} (1,x_1,\dots,x_n)$.
We refer readers to \cite[Section 3]{AO} for details.

\begin{Lem}[{\cite[Lemma 3.9]{AO}}] \label{lem:ordwt}
Let $\wt (x_1,\dots,x_n) = \frac{1}{r} (b_1,\dots,b_n)$ be an admissible weight at $\msp \in X$ satisfying the KBL condition.
Then the following assertions hold.
\begin{enumerate}
\item We have $\ord_E (D_i) \ge b_i/r$ for any $i$.
\item If $F_l^{\wt} = \alpha_l x_0^{m_l} x_{i_l}$, where $\alpha_i \in \mbC \setminus \{0\}$, for some $1 \le l \le n-3$, then the weight
\[
\wt' (x_1,\dots,x_n) = \frac{1}{r} (b'_1,\dots,b'_n),
\]
where $b'_j = b_j$ for $j \ne l$ and $b'_l = b_l + r$, satisfies the KBL condition.
In particular, $\ord_E (D_l) \ge (b_l+r)/r$.
\end{enumerate}
\end{Lem}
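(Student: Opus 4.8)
The plan is to treat the two assertions in turn, deducing (1) from the toric geometry of the ambient weighted blowup and (2) from a congruence argument on monomial weights that reduces it to (1) applied to an enlarged weight. For (1), I would show that $\ord_E$ dominates the weighted valuation attached to $\Phi_{\wt}$. Since $\Phi_{\wt} \colon Q_{\wt} \to \mbP$ is the toric blowup along the ray $\frac{1}{r}(b_1,\dots,b_n)$, the standard toric formula for the order of a coordinate along the new toric divisor gives $\ord_{E_{\mbP}}(x_i) = b_i/r$ for every $i$, where $E_{\mbP}$ is its exceptional divisor. The KBL condition guarantees that $\varphi = \varphi_{\wt}$ is the Kawamata blowup and, together with the identification $E \cong E_{\wt}$ recorded above, that $E$ is the reduced restriction $E_{\mbP}|_{Y_{\wt}}$. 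Restricting to $Y_{\wt}$, the function $x_i$ vanishes along $E$ to order at least its order $b_i/r$ along $E_{\mbP}$, because $E = E_{\mbP}|_{Y_{\wt}}$ is reduced and the proper-transform factor contributes a nonnegative amount; hence $\ord_E(D_i) = \ord_E(x_i|_X) \ge b_i/r$, which is (1).

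For (2), the key observation is that the monomial weights of each $F_m$ are quantized in integer steps. Admissibility gives $b_i \equiv a_i \pmod r$, and every monomial $x^{\beta}$ of $F_m$ has degree $d_m$, so its $\wt$-weight $\frac{1}{r}\sum_i \beta_i b_i$ is congruent to $d_m/r$ modulo $\mbZ$; thus all weights occurring in $F_m$ lie in the single coset $d_m/r + \mbZ$ and differ by integers. Consequently the hypothesis that $F_l^{\wt} = \alpha_l x_0^{m_l} x_{i_l}$ is a \emph{single} monomial, of weight $W$ say, forces every other monomial of $F_l$ to have weight at least $W+1$. Now $\wt'$ raises only the weight of the distinguished coordinate $x_{i_l}$ by $r$, hence raises the $\wt$-weight of a monomial by the power of $x_{i_l}$ it contains. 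I would then verify the KBL condition for $\wt'$ directly: the local weights $(b_{j_1},b_{j_2},b_{j_3})$ are untouched because $i_l \notin \{j_1,j_2,j_3\}$; for $m \ne l$ the distinguished monomial $x_0^{m_m} x_{i_m}$ keeps its minimal weight while no weight drops, so it still lies in $F_m^{\wt'}$; and for $m = l$ the monomial $x_0^{m_l} x_{i_l}$ now has weight $W+1$, the monomials of $F_l$ free of $x_{i_l}$ retain weight $\ge W+1$, and those still involving $x_{i_l}$ jump to weight $\ge W+2$, so $x_0^{m_l} x_{i_l}$ is again of minimal $\wt'$-weight and lies in $F_l^{\wt'}$. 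Thus $\wt'$ is admissible and satisfies the KBL condition, and applying (1) to $\wt'$ yields $\ord_E(D_{i_l}) \ge (b_{i_l}+r)/r$.

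The geometric input for (1) and the bookkeeping for (2) (admissibility of $\wt'$, the unaffected monomials for $m \ne l$) are routine. The step I expect to be the crux is the weight-gap observation in (2): one must be certain that a \emph{single} minimal-weight monomial really forces a gap of a full integer before the next weight level, for otherwise the bumped monomial $x_0^{m_l} x_{i_l}$ could be overtaken by an intermediate-weight term of $F_l$ and the KBL condition for $\wt'$ would break down. This is exactly where the congruence $b_i \equiv a_i \pmod r$ together with homogeneity is essential, confining all comparisons to the single coset $d_l/r + \mbZ$; making this quantization explicit is the part of the argument that requires the most care.
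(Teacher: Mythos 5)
The paper does not prove this lemma itself --- it is quoted verbatim from \cite[Lemma 3.9]{AO} --- but your argument is correct and is essentially the standard one used there: part (1) by restricting the toric relation $\Phi_{\wt}^*\operatorname{div}(x_i)=\widetilde{(x_i=0)}+\tfrac{b_i}{r}E_{\mbP}$ to the proper transform $Y_{\wt}$, and part (2) by the integer quantization of monomial weights within the coset $d_l/r+\mbZ$, which gives the full-integer gap needed to keep $x_0^{m_l}x_{i_l}$ in the lowest weight part after bumping $\wt$ to $\wt'$. Your identification of the weight-gap step as the crux, and your check that the KBL condition for $\wt'$ again realizes the same Kawamata blowup (so that (1) applies to the same valuation $E$), are exactly the points that need care, and you have handled both correctly.
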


We will use the following notation for a polynomial $f = f (x_0,\dots,x_n)$.
\begin{itemize}
\item For a monomial $p = x_0^{e_0} \cdots x_n^{e_n}$, we write $p \in f$ if $p$ appears in $f$ with non-zero (constant) coefficient.
\item For a subset $\mcC \subset \{x_0,\dots,x_n\}$ and $\Pi = \Pi (\mcC)$, we denote by $f|_{\Pi}$ the polynomial in variables $\{x_0,\dots,x_n\} \setminus \mcC$ obtained by putting $x_i = 0$ for $x_i \in \mcC$ in $f$.
\end{itemize}

\begin{Rem}
We explain some consequences of quasi-smoothness, which will be frequently used in Section \ref{sec:proofnum}.
We keep the above notation and assumption.
In particular $X \subset \mbP (a_0,\dots,a_n)$ is assumed to be quasi-smooth.
\begin{enumerate}
\item Let $\mcC = \{ x_{i_1}, \dots, x_{i_m} \}$ be the coordinates such that $r := \gcd \{ a_{i_1}, \dots, a_{i_m} \} > 1$ and $a_j$ is coprime to $r$ for any $j \ne i_1, \dots, i_m$.
Then 
\[
\Sigma := \Pi_X (\{x_0,\dots,x_n\} \setminus \mcC)
\] 
is contained in the singular locus of $X$ and $X$ has a quotient singular point of index $r$ at each point of $\Sigma$.
In particular, if $X$ has only isolated singularities (e.g.\ $\dim X = 3$ and $X$ has only terminal singularities), then either $\Sigma = \emptyset$ of $\Sigma$ consists of finite set of singular points of index $r$.
\item Let $x_k$ be the coordinate such that $a_j \ne a_k$ for any $j \ne k$.
If $X$ does not contain a singular point of index $r$, then $\msp_k \notin X$, that is, a power of $x_k$ appears in one of the defining polynomials with non-zero coefficient.
\end{enumerate}
\end{Rem}

\section{Proof of birational superrigidity by numerical data} \label{sec:proofnum} \label{sec:proofnum}

We prove birational superrigidity of codimension $4$ quasi-smooth prime Fano $3$-folds with no projections by utilizing only numerical data.
The numerical data for each Fano $3$-fold will be described in the beginning of the corresponding subsection.
The Fano $3$-folds are embedded in a weighted projective $7$-space, denoted by $\mbP$, and we use the symbol $p,q,r,s,t,u,v,w$ for the homogeneous coordinates of $\mbP$.
We use the following terminologies: 
Let $X \subset \mbP$ be a codimension $4$ quasi-smooth prime Fano $3$-fold.
For a homogeneous coordinate $z \in \{p,q,\dots,w\}$,
\begin{itemize}
\item $D_z := (z = 0) \cap X$ is the Weil divisor on $X$ cut out by $z$, and
\item $\msp_z \in \mbP$ is the point at which only the coordinate $z$ does not vanish.
\end{itemize} 

Note that Theorem \ref{mainthm2} will follow from Theorems \ref{thm:25}, \ref{thm:166}, \ref{thm:308}.

\subsection{Fano $3$-folds of numerical type $\# 25$}

Let $X$ be a quasi-smooth prime Fano $3$-fold of numerical type $\# 25$, whose data consist of the following.
\begin{itemize}
\item $X \subset \mbP (2_p,5_q,6_r,7_s,8_t,9_u,10_v,11_w)$. \\[-3.5mm]
\item $(-K_X)^3 = 1/70$. \\[-3.5mm]
\item $\deg (F_1,F_2,F_3,F_4,F_5,F_6,F_7,F_8,F_9) = (16,17,18,18,19,20,20,21,22)$. \\[-3.5mm]
\item $\mcB_X = \left\{ 7 \times \frac{1}{2} (1,1,1), \frac{1}{5} (1,1,4), \frac{1}{7} (1,2,5) \right\}$.
\end{itemize}

Here the subscripts $p, q, \dots, w$ of the weights means that they are the homogeneous coordinates of the indicated degrees, and $\mcB_X$ indicates the numbers and the types of singular points of $X$. 

\begin{Thm} \label{thm:25}
Let $X$ be a quasi-smooth prime codimension $4$ Fano $3$-fold of numerical type $\#25$.
Then $X$ is birationally superrigid.
\end{Thm}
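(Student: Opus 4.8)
The plan is to invoke the criterion that $X$ is birationally superrigid once every subvariety has been excluded as a maximal center, so the whole argument reduces to running through the list of potential centers: curves, nonsingular points, and the quotient singular points recorded in $\mcB_X$. The recurring feature that makes this feasible is the tiny anticanonical degree $(-K_X)^3 = 1/70$, which forces all the numerical inequalities in the exclusion lemmas to fall the right way.

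First I would dispose of curves and smooth points. Since $(-K_X)^3 = 1/70 \le 1$, Lemma \ref{lem:exclcurve} rules out every curve. The weights $2,5,6,7,8,9,10,11$ are already increasing, so the two largest are $a_6 = 10$, $a_7 = 11$, and $a_6 a_7 (-K_X)^3 = 110/70 = 11/7 \le 4$; thus Lemma \ref{lem:exclnonsingpt} rules out every nonsingular point. For the seven singular points of type $\frac{1}{2}(1,1,1)$, the largest odd weight is $b = 11$ and $2b(-K_X)^3 = 22/70 = 11/35 \le 1$, so Lemma \ref{lem:exclcri1/2pt} excludes all of them simultaneously.

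It then remains to exclude the $\frac{1}{5}(1,1,4)$ point at $\msp_q$ and the $\frac{1}{7}(1,2,5)$ point at $\msp_s$, and for both I would apply Lemma \ref{lem:exclcrisingpt}. For $\msp_q$ I would take $\mcC = \{p,s,t,u,v\}$: here $\wprod = 1\cdot 4 = 4$, so the target bound is $\wprod(-K_X)^3 = 4/70 = 2/35$, and computing residues modulo $5$ gives $\ivr_{\msp_q}(\mcC) = 2/35$, the minimum being attained at $s$, so condition (3) holds with equality. For $\msp_s$ I would take $\mcC = \{p,q,r\}$: here $\wprod = 2\cdot 5 = 10$, the target is $10/70 = 1/7$, and since $2,5,6$ are all at most $7$ each residue ratio equals $1/7$, so $\ivr_{\msp_s}(\mcC) = 1/7$ again meets the bound with equality.

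The part requiring genuine care is hypothesis (2), namely $\Pi_X(\mcC \cup \{x_k\}) = \emptyset$. For $\msp_q$ this is the intersection of $X$ with the curve $\mbP(6,11) = \Pi(p,q,s,t,u,v)$ in coordinates $r,w$, and for $\msp_s$ with $\mbP(8,9,10,11) = \Pi(p,q,r,s)$ in coordinates $t,u,v,w$. In each case I would first observe, via the remark on quasi-smoothness, that the relevant coordinate points lie off $X$ because their weights are unique and the basket has no singular point of those indices; this forces specific pure-power monomials ($r^3$ of degree $18$ and $w^2$ of degree $22$, and for $\msp_s$ also $t^2, u^2, v^2$) to occur in the defining equations. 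A short elimination then shows the restricted equations have no common zero: for $\msp_q$, $F_3|_{\Pi}$ (or $F_4|_{\Pi}$) forces $r=0$ and $F_9|_{\Pi}$ forces $w=0$; for $\msp_s$, $F_1|_{\Pi}$ and $F_9|_{\Pi}$ force $t=w=0$, after which the surviving $u^2$ and $v^2$ terms force $u=v=0$. Either way the locus is empty, conditions (1)--(3) are verified, and Lemma \ref{lem:exclcrisingpt} excludes both points; hence no maximal center survives and superrigidity follows. The only delicate point is that the vanishing-ratio bound is \emph{saturated} at both higher-index points, leaving no numerical slack, so the argument genuinely relies on the exact monomial structure forced by quasi-smoothness.
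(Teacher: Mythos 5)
Your proof is correct and follows essentially the same route as the paper: the same three numerical lemmas dispose of curves, nonsingular points and the $\frac{1}{2}(1,1,1)$ points, and Lemma \ref{lem:exclcrisingpt} handles the two higher-index points via the same quasi-smoothness-forced pure-power monomials. The only (harmless) deviation is your choice $\mcC = \{p,s,t,u,v\}$ at the $\frac{1}{5}(1,1,4)$ point where the paper uses $\{p,s,u,v\}$; both give $\ivr_{\msp}(\mcC) = 2/35$ and an empty residual locus, yours reducing the emptiness check to the two coordinates $r,w$.
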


\begin{proof}
By Lemmas \ref{lem:exclcurve} and \ref{lem:exclnonsingpt}, no curve and no nonsingular point on $X$ is a maximal center.
By Lemma \ref{lem:exclcri1/2pt}, singular points of type $\frac{1}{2} (1,1,1)$ are not maximal centers.

Let $\msp$ be the singular point of type $\frac{1}{5} (1,1,4)$.
Replacing the coordinate $v$ if necessary, we may assume $\msp = \msp_q$.
We set $\mcC = \{p,s,u,v\}$.
We have
\[
\ivr_{\msp} (\mcC) = \frac{2}{35} = \wprod (\msp) (-K_X)^3.
\]
By Lemma \ref{lem:exclcrisingpt}, it remains to show that $\Pi_X := \Pi_X (\mcC \cup \{q\}) = \emptyset$.
We set $\Pi := \Pi (\mcC \cup \{q\}) \subset \mbP$ so that $\Pi_X = \Pi \cap X$.
Since $\msp_t \notin X$, one of the defining polynomials contain a power of $t$.
By looking at the degrees of $F_1, \dots, F_9$, we have $t^2 \in F_1$.
Similarly, we have $r^3 \in F_3$ and $w^2 \in F_9$ after possibly interchanging $F_3$ and $F_4$.
The monomial $t^2$ (resp.\ $r^3$) is the only monomial of degree $16$ (resp.\ $18$) consisting of the variables $r,t,w$.
The monomials $w^2$ and $t^2 r$ are the only monomials of degree $22$ consisting the variables $r,t,w$.
Hence, re-scaling $r, t, w$, we can write
\[
F_1|_{\Pi} = t^2, \quad
F_3|_{\Pi} = r^3, \quad
F_9|_{\Pi} = w^2 + \alpha t^2 r,
\]
for some $\alpha \in \mbC$.
The set $\Pi_X$ is contained in the common zero loci of the above 3 polynomials inside $\Pi$.
The equations have only trivial solution and this shows that $\Pi_X = \emptyset$.
Thus $\msp$ is not a maximal center.

Let $\msp = \msp_s$ be the singular point of type $\frac{1}{7} (1,2,5)$ and  set $\mcC = \{p,q,r\}$.
We have
\[
\ivr_{\msp} (\mcC) = \frac{1}{7} = \wprod (\msp) (-K_X)^3.
\]
By Lemma \ref{lem:exclcrisingpt}, it remains to show that $\Pi_X := \Pi_X (\mcC \cup \{s\}) = \emptyset$.
We set $\Pi := \Pi (\mcC \cup \{s\}) \subset \mbP$ so that $\Pi_X = \Pi \cap X$.
Since $\msp_t, \msp_u, \msp_v, \msp_w \notin X$, we may assume $t^2 \in F_1$, $u^2 \in F_3$, $v^2 \in F_6$ and $w^2 \in F_9$ after possibly interchanging defining polynomials of the same degree.
Then we can write
\[
F_1|_{\Pi} = t^2, \quad
F_3|_{\Pi} = u^2 + \alpha v t, \quad
F_6|_{\Pi} = v^2 + \beta w u, \quad
F_9|_{\Pi} = w^2 + \gamma t^2 r,
\] 
for some $\alpha, \beta, \gamma \in \mbC$.
This shows that $\Pi_X = \emptyset$ and thus $\msp$ is not a maximal center.
This completes the proof.
\end{proof}

\subsection{Fano $3$-folds of numerical type $\#166$}
Let $X$ be a quasi-smooth prime Fano $3$-fold of numerical type $\# 166$, whose data consist of the following.
\begin{itemize}
\item $X \subset \mbP (2_p,2_q,3_r,3_s,4_t,4_u,5_v,5_w)$. \\[-3.5mm]
\item $(-K_X)^3 = 1/6$. \\[-3.5mm]
\item $\deg (F_1,F_2,F_3,F_4,F_5,F_6,F_7,F_8,F_9) = (8,8,8,9,9,9,10,10,10)$. \\[-3.5mm]
\item $\mcB_X = \left\{ 11 \times \frac{1}{2} (1,1,1), \frac{1}{3} (1,1,2) \right\}$.
\end{itemize}

\begin{Thm} \label{thm:166}
Let $X$ be a quasi-smooth prime codimension $4$ Fano $3$-fold of numerical type $\#166$.
Then $X$ is birationally superrigid.
\end{Thm}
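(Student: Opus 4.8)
The strategy is to exclude every subvariety of $X$ as a maximal center, after which birational superrigidity follows from the criterion that a Fano $3$-fold with no maximal center is birationally superrigid. Curves are dispatched immediately by Lemma~\ref{lem:exclcurve}, since $(-K_X)^3 = 1/6 \le 1$. For the unique point $\msp$ of type $\frac{1}{3}(1,1,2)$, which after a coordinate change we may place at $\msp_r$, I would apply Lemma~\ref{lem:exclcrisingpt} with $\mcC = \{p,q,s\}$: one computes $\wprod(\msp) = 2$ and $\ivr_{\msp}(\mcC) = 1/3 = \wprod(\msp)(-K_X)^3$, so the numerical hypothesis holds with equality, and it remains to verify $\Pi_X(\{p,q,r,s\}) = \emptyset$. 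On $\Pi(\{p,q,r,s\}) = \mbP(4,4,5,5)_{t,u,v,w}$ the defining equations restrict to binary quadrics in $(t,u)$ in degree $8$ and in $(v,w)$ in degree $10$; as $X$ carries no quotient singularity of index $4$ or $5$, quasi-smoothness forces the pure squares $t^2,u^2$ and $v^2,w^2$ to occur, and exactly as in the treatment of $\#25$ this leaves only the trivial solution.

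The two remaining classes are precisely those for which the numerical criteria of Section~2 fall just short, and this is where the content lies. For nonsingular points one has $a_{n-1}a_n(-K_X)^3 = 25/6 > 4$, so Lemma~\ref{lem:exclnonsingpt} does not apply as stated. I would instead rerun the multiplicity estimate of its proof (that of \cite[Lemma~2.6]{AO}) with a cheaper covering family: the curves cut out on $X$ by a general member of $|-4K_X|$ together with a general member of $|-5K_X|$. This family still sweeps out $X$ because $t,u \in H^0(X,-4K_X)$ and $v,w \in H^0(X,-5K_X)$, while a general member has anticanonical degree $4 \cdot 5 \cdot (-K_X)^3 = 10/3 \le 4$, which is what the estimate requires.

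The eleven points of type $\frac{1}{2}(1,1,1)$ are the crux. Here $b = 5$ gives $2b(-K_X)^3 = 5/3 > 1$, so Lemma~\ref{lem:exclcri1/2pt} is unavailable. Each such point lies on $(r=s=v=w=0) \cap X$ and cannot be a weight-$4$ vertex, as $X$ has no index-$4$ point, so after a weight-preserving coordinate change it becomes $\msp_p$; the three even coordinates $q,t,u$ are then the non-vertex coordinates eliminated by the Kawamata blowup, and one checks $\ord_E(D_q), \ord_E(D_t), \ord_E(D_u) \ge 1$, giving vanishing ratios $1/2, 1/4, 1/4$. Taking $\mcC = \{q,t,u\}$ in Lemma~\ref{lem:findnef} yields $e \ge 1/4 \ge 1/6$, hence $-\varphi^*K_X - \frac{1}{6}E$ is nef and $(-\varphi^*K_X - \frac{1}{6}E)\cdot(-K_Y)^2 = (-K_X)^3 - \frac{1}{6} = 0$ (using $(E^3)=4$), so Lemma~\ref{lem:exclsingNE} would exclude $\msp$ — provided $\Pi_X(\{q,t,u\})$ contains no curve through $\msp$. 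This last finiteness statement is the main obstacle: unlike the index-$4$ and index-$5$ directions in the $\frac{1}{3}$ analysis, it is not forced by quasi-smoothness alone, since the relevant equations only control the eliminated directions, and whether a spurious one-dimensional component appears through $\msp$ depends on the local orbifold coordinates. When $\mcC = \{q,t,u\}$ does leave such a curve, I would fall back on the infinitely-many-curves method of Lemma~\ref{lem:exclsinginfini}, taking $S = D_q$ and the pencil $\mcL = \langle r,s \rangle$: the computation $(-K_Y \cdot \tilde S \cdot \tilde L) = 6(-K_X)^3 - 2\,\ord_E(D_q)\,\ord_E(L) \le 1 - 1 = 0$ secures condition~(2), and the task reduces once more to ruling out a curve through $\msp$ in $S \cap \Bs\mcL$. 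Carrying this dichotomy out uniformly over all eleven points, using quasi-smoothness to pin down the local structure at each, is the delicate heart of the proof.
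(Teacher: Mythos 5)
Your framework is the right one and your treatment of curves and of the $\frac{1}{3}(1,1,2)$ point matches the paper's (same $\ivr$ computation, same use of the absence of index $4$ and index $5$ points to get $\Pi_X(\{p,q,r,s\}) = \emptyset$). But there is a genuine gap at the eleven $\frac{1}{2}(1,1,1)$ points, and you have located it yourself: with $\mcC = \{q,t,u\}$ the whole argument hinges on $D_q \cap D_t \cap D_u$ containing no curve through $\msp$, which, as you say, is not forced by quasi-smoothness, and your fallback via Lemma~\ref{lem:exclsinginfini} trades this for the equally unverified finiteness of $D_q \cap \Bs\mcL$ near $\msp$. The paper closes exactly this gap by a different choice of coordinate set: it takes $\mcC = \{q,r,s,t,u\}$, i.e.\ it throws in the two weight-$3$ coordinates as well. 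This lowers $\ivr_{\msp}(\mcC)$ from your $1/4$ to $\overline{3}/(3\cdot 2) = 1/6$, but that still equals $\wprod(\msp)(-K_X)^3 = 1/6$, so hypothesis (3) of Lemma~\ref{lem:exclcrisingpt} holds with equality; and in exchange hypothesis (2) becomes automatic, since $\Pi_X(\mcC \cup \{p\})$ lies in the locus where only $v,w$ are nonzero, which is empty because $X$ has no index-$5$ point. (Recall that in Lemma~\ref{lem:exclcrisingpt} the finiteness of $\Pi_X(\mcC)$ is \emph{deduced} from $\Pi_X(\mcC\cup\{x_k\})=\emptyset$ and ampleness, so no case-by-case analysis of the eleven points is needed.) The "delicate heart" you anticipated dissolves entirely once one is willing to spend the vanishing-ratio budget down to the exact threshold.

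Two smaller remarks. First, your observation that $a_{n-1}a_n(-K_X)^3 = 25/6 > 4$, so that Lemma~\ref{lem:exclnonsingpt} does not apply as literally stated, is correct and sharper than the paper, which cites the lemma without comment; the underlying isolating-class argument of \cite[Lemma 2.6]{AO} still succeeds here because the repeated weights ($v,w$ of weight $5$, etc.) give isolating classes of much lower degree than $a_{n-1}a_n$, but your proposed replacement (a covering family of curves of degree $10/3$) would itself need the irreducibility and non-containment in $\Bs\mcH$ of the general member, which you assert rather than prove. Second, in the $\frac{1}{3}(1,1,2)$ analysis the clean argument is not that quasi-smoothness forces the pure squares $t^2, u^2, v^2, w^2$ to occur, but rather that any nontrivial common zero of the restricted degree-$8$ (resp.\ degree-$10$) equations would be a point of $X$ lying on the $(t\!:\!u)$-locus (resp.\ $(v\!:\!w)$-locus), hence a singular point of index $4$ (resp.\ $5$), contradicting the basket; this is how the paper phrases it and it avoids any discussion of which monomials appear.
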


\begin{proof}
By Lemmas \ref{lem:exclcurve} and \ref{lem:exclnonsingpt}, no curve and no nonsingular point is a maximal center.

Let $\msp$ be a singular point of type $\frac{1}{2} (1,1,1)$.
After replacing coordinates, we may assume $\msp = \msp_p$.
We set $\mcC = \{q, r, s, t, u\}$.
We have
\[
\ivr_{\msp} (\mcC) = \frac{1}{6} = \wprod (\msp) (-K_X)^3.
\]
Moreover we have $\Pi_X (\mcC \cup \{p\}) = \emptyset$ because $X$ is quasi-smooth and it does not have a singular point of index $5$.
Thus, by Lemma \ref{lem:exclcrisingpt}, $\msp$ is not a maximal center.

Let $\msp$ be the singular point of type $\frac{1}{3} (1,1,2)$.
After replacing $r$ and $s$, we may assume $\msp = \msp_s$.
We set $\mcC = \{p,q,r\}$.
Then we have
\[
\ivr_{\msp} (\mcC) = \frac{1}{3} = \wprod (\msp) (-K_X)^3.
\]
By Lemma \ref{lem:exclcrisingpt}, it remains to show that $\Pi_X := \Pi_X (\mcC \cup \{s\}) = \emptyset$.
We set $\Pi := \Pi (\mcC \cup \{s\}) \subset \mbP$ so that $\Pi_X = \Pi \cap X$.
We have
\[
\Pi_X = (F_1|_{\Pi} = F_2|_{\Pi} = F_3|_{\Pi} = F_7|_{\Pi} = F_8|_{\Pi} = F_9|_{\Pi} = 0) \cap \Pi.
\]
We see that $F_1|_{\Pi}, F_2|_{\Pi}, F_3|_{\Pi}$ consist only of monomials in variables $t, u$, and $X$ does not have a singular point of index $4$.
Hence the equation
\[
F_1|_{\Pi} = F_2|_{\Pi} = F_3|_{\Pi} = 0
\]
implies $t = u = 0$.
Similarly, $F_7|_{\Pi}, F_8|_{\Pi}, F_9|_{\Pi}$ consist only of the monomials in variables $v, w$, and $X$ does not contain a singular point of index $5$.
Hence the equation
\[
F_7|_{\Pi} = F_8|_{\Pi} = F_9|_{\Pi} = 0
\]
implies $v = w = 0$.
It follows that $\Pi_X = \emptyset$ and $\msp$ is not a maximal center.
Therefore $X$ is birationally superrigid.
\end{proof}

\subsection{Fano $3$-folds of numerical type $\#282$}
Let $X$ be a quasi-smooth prime Fano $3$-fold of numerical type $\# 282$, whose data consist of the following.
\begin{itemize}
\item $X \subset \mbP (1_p, 6_q, 6_r, 7_s, 8_t, 9_u, 10_v, 11_w)$. \\[-3.5mm]
\item $(-K_X)^3 = 1/42$. \\[-3.5mm]
\item $\deg (F_1, F_2, F_3, F_4, F_5,F_6, F_7, F_8, F_9) = (16, 17, 18, 18, 19, 20, 20, 21, 22)$. \\[-3.5mm]
\item $\mcB = \left\{ 2 \times \frac{1}{2} (1,1,1), 2 \times \frac{1}{3} (1,1,2), \frac{1}{6} (1,1,5), \frac{1}{7} (1,1,6) \right\}$.
\end{itemize}

\begin{Prop} \label{prop:282}
Let $X$ be a quasi-smooth prime codimension $4$ Fano $3$-fold of numerical type $\#282$.
Then no curve and no point is a maximal center except possibly for the singular point of type $\frac{1}{6} (1,1,5)$.
\end{Prop}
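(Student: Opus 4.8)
The plan is to exclude, class by class, every potential maximal center except the singular point of type $\frac{1}{6}(1,1,5)$, following the pattern of the proofs of Theorems \ref{thm:25} and \ref{thm:166}. First I would dispose of the cheap cases by the numerical criteria. Since $(-K_X)^3 = 1/42 \le 1$, Lemma \ref{lem:exclcurve} rules out every curve. Ordering the weights as $1 \le 6 \le 6 \le 7 \le 8 \le 9 \le 10 \le 11$ gives $a_{n-1}a_n(-K_X)^3 = 110/42 \le 4$, so Lemma \ref{lem:exclnonsingpt} rules out every nonsingular point. For the two points of type $\frac{1}{2}(1,1,1)$ the largest odd weight is $b = 11$ and $2b(-K_X)^3 = 22/42 < 1$, so Lemma \ref{lem:exclcri1/2pt} applies. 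This leaves the $\frac{1}{7}(1,1,6)$ point and the two $\frac{1}{3}(1,1,2)$ points.

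The $\frac{1}{7}(1,1,6)$ point sits at the coordinate vertex $\msp_s$, so I would apply Lemma \ref{lem:exclcrisingpt} with $\mcC = \{p,q,r\}$. One computes $\ivr_{\msp}(\mcC) = 1/7 = \wprod(\msp)(-K_X)^3$, so only the emptiness $\Pi_X(p,q,r,s) = \emptyset$ remains to be checked. As $X$ has no quotient singularity of index $8,9,10,11$, quasi-smoothness forces $t^2 \in F_1$, $u^2 \in F_3$, $v^2 \in F_6$, $w^2 \in F_9$ (after relabelling equations of equal degree). Restricting to $\Pi = \mbP(8,9,10,11)_{t,u,v,w}$ produces the triangular system $F_1|_{\Pi} = t^2$, $F_3|_{\Pi} = u^2 + \alpha tv$, $F_6|_{\Pi} = v^2 + \beta uw$, $F_9|_{\Pi} = w^2$, whose only solution is the origin; hence $\Pi_X = \emptyset$ and $\msp_s$ is excluded.

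The genuine obstacle is the pair of $\frac{1}{3}(1,1,2)$ points. Because $X$ has no index-$9$ point, $\msp_u \notin X$, and one checks that each index-$3$ point lies on the weighted plane $\mbP(6,6,9)_{q,r,u}$ away from its vertices; in particular it is \emph{not} a coordinate vertex of $\mbP$ and cannot be brought to one by a coordinate change respecting the grading, so the vertex-based Lemma \ref{lem:exclcrisingpt} does not apply. Instead I would invoke the more flexible Lemma \ref{lem:findnef} together with Lemma \ref{lem:exclsingNE}, neither of which requires the point to be a vertex. Fix such a point $\msp$ and its Kawamata blowup $\varphi \colon (E \subset Y) \to (\msp \in X)$. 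The order bound $\ord_E(D_z) \ge \overline{a_z}/3$ for a semi-invariant $z$ vanishing at $\msp$ still holds (it only uses the $\mu_3$-weight), giving $\ord_E(D_z)/n_z \ge 1/3,\ 1/21,\ 1/12,\ 2/33$ for $z = p,s,t,w$ respectively. Taking $\mcC = \{p,s,t,w\}$ thus yields $e \ge 1/21$.

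It then remains to verify that $D_p \cap D_s \cap D_t \cap D_w$ contains no curve through $\msp$. Restricting to $\Pi = \mbP(6,6,9,10)_{q,r,u,v}$, the relation $v^2 \in F_6$ forces $v = 0$ (the competing monomial $uw$ dies since $w = 0$ on $\Pi$), so the intersection collapses onto $X \cap \mbP(6,6,9)_{q,r,u} \subset \Sing X$, which is finite by terminality. Lemma \ref{lem:findnef} then produces the nef divisor $-\varphi^*K_X - \tfrac{1}{21}E$, and
\[
\left(-\varphi^*K_X - \tfrac{1}{21}E\right)(-K_Y)^2 = (-K_X)^3 - \frac{1/21}{\wprod(\msp)} = \frac{1}{42} - \frac{1}{42} = 0,
\]
so $(-K_Y)^2 \notin \Int \bNE(Y)$ and $\msp$ is not a maximal center by Lemma \ref{lem:exclsingNE}. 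The main difficulty throughout is precisely the non-vertex location of the $\frac{1}{3}(1,1,2)$ points, which forces this hands-on order estimate and finiteness check in place of the ready-made vertex criterion; the $\frac{1}{6}(1,1,5)$ point is deliberately left for Theorem \ref{mainthm1}.
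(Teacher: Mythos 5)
Your proposal is correct and follows essentially the same route as the paper: the numerical lemmas for curves, nonsingular points and $\frac{1}{2}(1,1,1)$ points, Lemma \ref{lem:exclcrisingpt} with $\mcC=\{p,q,r\}$ and the triangular system $t^2,\,u^2+\alpha vt,\,v^2+\beta wu,\,w^2$ for the $\frac{1}{7}(1,1,6)$ point, and for the non-vertex $\frac{1}{3}(1,1,2)$ points the same choice $\mcC=\{p,s,t,w\}$ with $e\ge 1/21$, the same use of $v^2\in F_6$ to reduce the intersection to the finite singular locus $X\cap(p=s=t=v=w=0)$, and the same vanishing $(N\cdot(-K_Y)^2)=0$ via Lemmas \ref{lem:findnef} and \ref{lem:exclsingNE}. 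All computations match the paper's.
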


\begin{proof}
By Lemmas \ref{lem:exclcurve}, \ref{lem:exclnonsingpt} and \ref{lem:exclcri1/2pt}, it remains to exclude singular points of type $\frac{1}{3} (1,1,2)$ and $\frac{1}{7} (1,1,6)$ as maximal centers.

Let $\msp$ be a singular point of type $\frac{1}{3} (1,1,2)$ and let $\varphi \colon (E \subset Y) \to (\msp \in X)$ be the Kawamata blowup.
We claim that $\Pi_X (p,s,t,w) = D_p \cap D_s \cap D_t \cap D_w$ is a finite set of points (containing $\msp$).
Since $X$ does not contain a singular point of index $10$, we may assume that $v^2 \in F_6$.
Then, by re-scaling $v$, we have
\[
F_6 (0,q,r,0,0,u,v,0) = v^2
\]
and this shows that $\Pi_X (p,s,t,w) = \Pi_X (p,s,t,v,w)$.
The latter set consists of singular points $\{2 \times \frac{1}{2} (1,1,2), \frac{1}{6} (1,1,5)\}$ and thus $\Pi_X (p,s,t,w)$ is a finite set of points.
We have
\[
\ord_E (D_p), \ord_E (D_s) \ge \frac{1}{3}, \quad 
\ord_E (D_t), \ord_E (D_w) \ge \frac{2}{3}.
\]
By Lemma \ref{lem:findnef}, $N := -\varphi^*K_X - \frac{1}{21} E$ is a nef divisor on $Y$ and we have $(N \cdot (-K_Y)^2) = 0$.
Thus $\msp$ is not a maximal center.

Let $\msp = \msp_s$ be the singular point of type $\frac{1}{7} (1,1,6)$ and
set $\mcC = \{p,q,r\}$.
We have
\[
\ivr_{\msp} (\mcC) = \frac{1}{7} = \wprod (\msp) (-K_X)^3.
\]
We set $\Pi := \Pi (\mcC \cup \{s\})$.
Since $\msp_t, \msp_u, \msp_v, \msp_w \notin X$, we have $t^2 \in F_1$, $w^2 \in F_9$ and we may assume $u^2 \in F_3$, $v^2 \in F_6$.
Then, by re-scaling $t, u, v, w$, we can write
\[
F_1|_{\Pi} = t^2, \ 
F_3|_{\Pi} = \alpha v t + u^2, \ 
F_6|_{\Pi} = \beta w u + v^2, \ 
F_9|_{\Pi} = w^2,
\]
where $\alpha, \beta \in \mbC$.
This shows that $\Pi_X (\mcC \cup \{s\}) = \Pi \cap X = \emptyset$.
Thus $\msp$ is not a maximal center by Lemma \ref{lem:exclcrisingpt} and the proof is completed.
\end{proof}

\subsection{Fano $3$-folds of numerical type $\#308$}
Let $X$ be a quasi-smooth prime Fano $3$-fold of numerical type $\# 308$, whose data consist of the following.
\begin{itemize}
\item $X \subset \mbP (1_p,5_q,6_r,6_s,7_t,8_u,9_v,10_w)$. \\[-3.5mm]
\item $(-K_X)^3 = 1/30$. \\[-3.5mm]
\item $\deg (F_1, F_2, F_3, F_4, F_5, F_6, F_7, F_8, F_9) = (14, 15, 16, 16, 17, 18, 18, 19, 20)$. \\[-3.5mm]
\item $\mcB_X = \left\{ \frac{1}{2} (1,1,1), \frac{1}{3} (1,1,2), \frac{1}{5} (1,2,3), 2 \times \frac{1}{6} (1,1,5) \right\}$.
\end{itemize}

\begin{Thm} \label{thm:308}
Let $X$ be a quasi-smooth prime Fano $3$-fold of numerical type $\# 308$.
Then $X$ is birationally superrigid.
\end{Thm}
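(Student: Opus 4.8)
The plan is to exclude, one class at a time, every subvariety of $X$ as a maximal center, following the pattern of Theorems \ref{thm:25} and \ref{thm:166}. The three easy classes come first: since $(-K_X)^3 = \tfrac{1}{30} \le 1$, curves are excluded by Lemma \ref{lem:exclcurve}; since $a_{n-1} a_n (-K_X)^3 = 9 \cdot 10 \cdot \tfrac{1}{30} = 3 \le 4$, nonsingular points are excluded by Lemma \ref{lem:exclnonsingpt}; and since the largest odd weight is $b = 9$ with $2 b (-K_X)^3 = \tfrac{18}{30} = \tfrac{3}{5} \le 1$, the point of type $\tfrac{1}{2}(1,1,1)$ is excluded by Lemma \ref{lem:exclcri1/2pt}. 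It then remains to treat the points of index $3$, $5$ and $6$.

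The two points of type $\tfrac{1}{6}(1,1,5)$ are the vertices $\msp_r, \msp_s$, and I would exclude them by Lemma \ref{lem:exclcrisingpt} exactly as for $\#25$. For $\msp_r$ I take $\mcC = \{p,q,s\}$, so that $\ivr_{\msp_r}(\mcC) = \tfrac{1}{6} = \wprod(\msp_r)(-K_X)^3$, and the remaining hypothesis $\Pi_X(p,q,r,s) = \emptyset$ follows cleanly: because $X$ carries no point of index $7,8,9,10$, the vertices $\msp_t, \msp_u, \msp_v, \msp_w$ lie off $X$, so $t^2 \in F_1$, $u^2 \in F_3$, $v^2 \in F_6$, $w^2 \in F_9$ (after interchanging equations of equal degree), and restricting these pure powers to $\Pi(p,q,r,s)$ forces $t=u=v=w=0$. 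The vertex $\msp_s$ is symmetric with $\mcC = \{p,q,r\}$.

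The point of type $\tfrac{1}{3}(1,1,2)$ does not sit at a vertex: a residue count $(\gcd(6,6,9)=3)$ places the whole index-$3$ locus inside $\Pi_X(p,q,t,u,w) \subset \mbP(6,6,9)$, where $\msp_r,\msp_s$ supply the two $\tfrac{1}{6}$ points while the $\tfrac{1}{3}$ point is the remaining one, with $r,s,v$ all nonzero. Nevertheless Lemma \ref{lem:findnef} still applies, as it requires only a terminal quotient point together with order estimates. Working in local orbifold coordinates, any coordinate $z$ vanishing at this point satisfies $\ord_E(D_z) \ge \overline{a_z}/3$ (with $\overline{a_z} \equiv a_z \bmod 3$, $\overline{a_z}\in\{1,2,3\}$) by the $\mu_3$-weight of $z$, giving vanishing ratios $\tfrac{1}{3}, \tfrac{2}{15}, \tfrac{1}{12}$ for $D_p, D_q, D_u$. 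Taking these three divisors, Lemma \ref{lem:findnef} shows that $-\varphi^* K_X - \tfrac{1}{12} E$ is nef, and $\bigl((-\varphi^* K_X - \tfrac{1}{12}E)\cdot(-K_Y)^2\bigr) = \tfrac{1}{30} - \tfrac{1}{24} < 0$ excludes the point by Lemma \ref{lem:exclsingNE}. The no-curve hypothesis is again robust here: on $\{p=q=u=0\}$ the pure powers $t^2 \in F_1$ and $w^2 \in F_9$ force $t=w=0$, so $D_p \cap D_q \cap D_u = \Pi_X(p,q,t,u,w)$, and this is finite because quasi-smoothness forces $rv, sv \in F_2$ (the tangents at $\msp_r, \msp_s$) and $v^2 \in F_6$, which together with the degree-$18$ equations cut $\mbP(6,6,9)$ down to points.

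The point $\msp_q$ of type $\tfrac{1}{5}(1,2,3)$ is the main obstacle, because here the bound $\wprod(\msp_q)(-K_X)^3 = \tfrac{6}{30} = \tfrac{1}{5}$ is critical and, with the naive estimate $\ord_E(D_i) \ge \overline{a_i}/5$, only $D_p$ attains the ratio $\tfrac{1}{5}$, every other coordinate falling short; thus Lemma \ref{lem:exclcrisingpt} is unavailable. The plan is to boost: after a coordinate change $r \mapsto r - c\,qp - c'\,s$ and its symmetric counterpart, purifying the two degree-$16$ tangent equations to $F_3^{\wt} = \alpha q^2 r$ and $F_4^{\wt} = \beta q^2 s$, Lemma \ref{lem:ordwt}(2) raises the admissible weights to $b_r = b_s = 6$, so $\ord_E(D_r), \ord_E(D_s) \ge \tfrac{6}{5}$ and both ratios become exactly $\tfrac{1}{5}$. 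Then Lemma \ref{lem:findnef} applied to $\{D_p, D_r, D_s\}$ yields the nef class $N = -\varphi^* K_X - \tfrac{1}{5} E$ with $(N \cdot (-K_Y)^2) = \tfrac{1}{30} - \tfrac{1}{30} = 0$, and Lemma \ref{lem:exclsingNE} finishes (equivalently one may invoke Lemma \ref{lem:exclsinginfini} with $S = D_p$ and the pencil $\mcL = \langle r, s\rangle$, the boosted orders giving $(-K_Y \cdot \tilde S \cdot \tilde L) = 0$). I expect the genuinely hard step to be the no-curve hypothesis $-$ that $\{p=r=s=0\} \cap X$ contains no curve through $\msp_q$, for every quasi-smooth $X$ rather than a general one. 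Unlike the index-$6$ and index-$3$ cases, restriction to $\{p=r=s=0\}$ does not collapse through pure powers alone, since the eliminated coordinates inject interfering terms $qv \in F_1$ and $qw \in F_2$; instead one must solve $v,w$ off the two linear tangent equations and verify that the residual pair $-$ governed near $\msp_q$ by $u^2 \in F_3$ and by the cubic $tv$ (with $v \sim -t^2$) surviving in the purified $F_4$ $-$ cuts the $(t,u)$-plane down to the origin. Making this local transversality argument uniform over the whole quasi-smooth family is where I expect the real work of the proof to lie.
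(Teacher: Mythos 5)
Most of your exclusions coincide with the paper's: curves, nonsingular points and the $\tfrac{1}{2}(1,1,1)$ point via Lemmas \ref{lem:exclcurve}, \ref{lem:exclnonsingpt}, \ref{lem:exclcri1/2pt}; the two $\tfrac{1}{6}(1,1,5)$ points via Lemma \ref{lem:exclcrisingpt} with $\mcC=\{p,q,r\}$ (resp.\ $\{p,q,s\}$) and $\Pi_X(p,q,r,s)=\emptyset$; and the $\tfrac{1}{3}(1,1,2)$ point via the nef divisor $-\varphi^*K_X-\tfrac{1}{12}E$ built from $D_p,D_q,D_u$, which is the paper's $N=-8\varphi^*K_X-\tfrac{2}{3}E$ up to scaling. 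These parts are correct.

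The genuine gap is at the $\tfrac{1}{5}(1,2,3)$ point $\msp_q$. Your ``purification'' of the two degree-$16$ equations to $F_3^{\wt}=\alpha q^2 r$ and $F_4^{\wt}=\beta q^2 s$ requires the $2\times 2$ matrix of coefficients of $q^2r$ and $q^2s$ in $F_3,F_4$ to be invertible. Quasi-smoothness only forces the $2\times 3$ matrix including the $q^3p$ column to have rank $2$, so the degenerate case --- say $F_3=q^3p+\cdots$, $F_4=q^2s+\cdots$, with $r$ surviving as a local orbifold coordinate at $\msp_q$ --- is a genuine possibility consistent with the singularity type. There $\ord_E(D_r)=1/5$ exactly, the minimal vanishing ratio among any collection containing $D_r$ drops to $1/30$, and the intersection number $(-K_X)^3-\tfrac{1/30}{6}=\tfrac{1}{30}-\tfrac{1}{180}>0$ no longer excludes the point; moreover $D_p\cap D_s$ alone is a surface, so Lemma \ref{lem:findnef} cannot be fed with only the two boosted divisors. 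The paper splits into exactly these two cases and, in the degenerate one, uses that $q^3p$ is then the \emph{unique} initial-weight monomial of $F_3$, whence $\ord_E(D_p)\ge 6/5$, and applies Lemma \ref{lem:exclsinginfini} with $S=D_p$ and the pencil $\mcL=\langle r,s\rangle$. Your parenthetical ``equivalently'' remark gestures at this, but with the roles of the boosted divisors reversed; it is not an equivalent reformulation but the only argument that survives the degeneration. Finally, the finiteness of $D_p\cap D_r\cap D_s$ near $\msp_q$, which you leave open as ``the real work,'' is in fact immediate: intersecting further with the ample divisor $D_q$ kills the interfering terms $qv\in F_1$, $qw\in F_2$ and reduces the claim to $\Pi_X(p,q,r,s)=\emptyset$, which you have already established for the index-$6$ points.
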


\begin{proof}
By Lemmas \ref{lem:exclcurve}, \ref{lem:exclnonsingpt} and \ref{lem:exclcri1/2pt}, no curve and no nonsingular point is a maximal center and the singular point of type $\frac{1}{2} (1,1,1)$ is not a maximal center.

Let $\msp$ be the singular point of type $\frac{1}{3} (1,1,2)$, which is necessary contained in $(p = q = t = u = w = 0)$, and let $\varphi \colon (E \subset Y) \to (\msp \in X)$ be the Kawamata blowup.
We set $\mcC = \{p,q,u\}$ and $\Pi = \Pi (\mcC) \subset \mbP$.
Since $\msp_t, \msp_w \notin X$, we have $t^2 \in F_1, w^2 \in F_9$ and we can write
\[
F_1|_{\Pi} = t^2, \quad
F_9|_{\Pi} = w^2 + \alpha t^2 r + \beta t^2 s,
\]
where $\alpha, \beta \in \mbC$.
Thus,
\[
\Pi_X (\mcC) = \Pi \cap X = \Pi_X (p,q,t,u,w),
\]
and this consists of two $\frac{1}{6} (1,1,5)$ points and $\msp$.
In particular $D_p \cap D_q \cap D_u = \Pi_X (\mcC)$ is a finite set of points.
We have
\[
\ord_E (D_p) \ge \frac{1}{3}, \quad
\ord_E (D_q) \ge \frac{2}{3}, \quad
\ord_E (D_u) \ge \frac{2}{3},
\]
hence $N := - 8 \varphi^*K_X - \frac{2}{3} E$ is a nef divisor on $Y$ by Lemma \ref{lem:findnef}.
We have
\[
(N \cdot (-K_Y)^2) = 8 (-K_X)^3 - \frac{2}{3^3} \cdot \frac{3^2}{2} = - \frac{1}{15} < 0.
\]
By Lemma \ref{lem:exclsingNE}, $\msp$ is not a maximal center.

Let $\msp$ be a singular point of type $\frac{1}{6} (1,1,5)$.
After replacing $r$ and $s$, we may assume $\msp = \msp_s$.
We set $\mcC = \{p,q,r\}$.
We have
\[
\ivr_{\msp} (\mcC) = \frac{1}{6} = \wprod (\msp) (-K_X)^3.
\]
Since $\msp_t, \msp_u, \msp_v, \msp_w \notin X$, we may assume $t^2 \in F_1, u^2 \in F_3, v^2 \in F_6, w^2 \in F_9$ after possibly interchanging $F_3$ with $F_4$ and $F_6$ with $F_7$. 
Then, by setting $\Pi = \Pi (\mcC \cup \{s\})$ and by re-scaling $t, u, v, w$, we have
\[
F_1|_{\Pi} = t^2, \ 
F_3|_{\Pi} = u^2 + \alpha v t, \ 
F_6|_{\Pi} = v^2 + \beta w u, \ 
F_9|_{\Pi} = w^2,
\]
where $\alpha, \beta \in \mbC$.
This shows that $\Pi_X (\mcC \cup \{s\}) = \emptyset$ and $\msp$ is not a maximal center by Lemma \ref{lem:exclcrisingpt}.

Finally, let $\msp$ be a singular point of type $\frac{1}{5} (1,2,3)$ and let $\varphi \colon (E \subset Y) \to (\msp \in X)$ be the Kawamata blowup.
Replacing the coordinate $w$, we may assume $\msp = \msp_q$.
We write 
\[
\begin{split}
F_3 &= \lambda q^3 x + \mu q^2 r + \nu q^2 s + q f_{11} + f_{16}, \\
F_4 &= \lambda' q^3 x + \mu' q^2 r + \nu' q^2 s + q g_{11} + g_{16},
\end{split}
\] 
where $\lambda,\mu,\nu,\lambda', \mu', \nu' \in \mbC$ and $f_{11}, f_{16}, g_{11}, g_{16} \in \mbC [p,r,s,t,u,v,w]$ are homogeneous polynomials of the indicated degrees.

We first consider the case where $\mu \nu' - \nu \mu' \ne 0$. 
By replacing $r$ and $s$, we may assume that $\mu = \nu' = 1$ and $\lambda = \nu = \lambda' = \mu' = 0$.
We consider the initial weight at $\msp$
\[
\inwt (p, r, s, t, u, v, w) = \frac{1}{5} (1,1,1,2,3,4,5).
\]
Then $F_3^{\inwt} = q^2 r$ and $F_4^{\inwt} = q^2 s$, and this implies $\ord_E (D_r), \ord_E (D_s) \ge 6/5$.
Note that $\ord_E (D_p) \ge \inwt (x) = 1/5$.
We set $\mcC = \{p,r,s\}$ and $\Pi = \Pi (\mcC \cup \{q\})$.
By re-scaling $t, u, v, w$, we can write
\[
F_1|_{\Pi} = t^2, \ 
F_3|_{\Pi} = u^2 + \alpha v t, \ 
F_6|_{\Pi} = v^2 + \beta w u, \ 
F_9|_{\Pi} = w^2,
\]
where $\alpha, \beta \in \mbC$.
Hence $\Pi_X (\mcC \cup \{q\}) = \emptyset$.
Since $D_q$ is an apmle divisor, this implies that $D_p \cap D_r \cap D_s$ is a finite set of points (including $\msp$).
By Lemma \ref{lem:findnef}, $N := - \varphi^*K_X - \frac{1}{5} E$ is a nef divisor on $Y$.
We have
\[
(N \cdot (-K_Y)^2) = (-K_X)^3 - \frac{1}{5^3} (E^3) = \frac{1}{30} - \frac{1}{30} = 0,
\]
and this shows that $\msp$ is not a maximal center.

Next we consider the case where $\mu \nu' - \nu \mu' = 0$.
By replacing $r$ and $s$ suitably and by possibly interchanging $F_3$ and $F_4$, we may assume that 
\[
\begin{split}
F_3 &= q^3 p + q f_{11} + f_{16}, \\
F_4 &= q^2 s + q g_{11} + g_{16}.
\end{split}
\]
It is straightforward to see that $q^3 p$ is the unique monomial in $F_3$ with initial weight $1/5$, so that $\ord_E (D_p) \ge 6/5$.
Let $\mcL \subset |-6 K_X|$ be the pencil generated by the sections $r$ and $s$.
Since $\ord_E (D_r) = 1/5$ and $\ord_E (D_s) \ge 1/5$, a general member $L \in \mcL$ vanishes along $E$ to order $1/5$ so that $\tilde{L} \sim  - 6 \varphi^*K_X - \frac{1}{5} E$.
We have
\[
(-K_Y \cdot \tilde{D}_p \cdot \tilde{L}) = 6 (-K_X)^3 - \frac{\ord_E (D_p)}{5^2} \cdot (E^3) = \frac{1}{5} - \frac{\ord_E (D_p)}{6} \le 0
\]
since $\ord_E (p) \ge 6/5$.
By Lemma \ref{lem:exclsinginfini}, $\msp$ is not a maximal center and the proof is completed.
\end{proof}

\section{Birational superrigidity of cluster Fano $3$-folds}

In this section, we prove Theorem \ref{mainthm1}, which follow from Theorems \ref{thm:282G} and \ref{thm:282C} below.

\subsection{$\# 282$ by $\mathsf{G}_2^{(4)}$ format} \label{sec:proof282}

Let $X$ be a codimension $4$ prime Fano $3$-fold of numerical type $\# 282$ constructed in $\mathsf{G}_2^{(4)}$ format.
Then, by \cite[Example 5.5]{CD}, $X$ is defined by the following polynomials in $\mbP (1_p, 6_q, 6_r, 7_s, 8_t, 9_u, 10_v, 11_w)$.
\[
\begin{split}
F_1 &= t^2 - q v + s Q_9, \\
F_2 &= u t - q w + s( v + p^2 t), \\
F_3 &= t (v + p^2 t) - u Q_9 + q (q r + p^4 t), \\
F_4 &=(w + p^4 s) s - P_{12} q + u (u + p^2 s), \\
F_5 &= t w - u v + s (q r + p^4 t), \\
F_6 &=(q r + p^4 t) t - Q_9 w + v (v + p^2 t), \\
F_7 &=r s^2 - w u + t P_{12}, \\
F_8 &= P_{12} Q_9 - (v w + p^4 q w + p^2 u v + u q r + s t r - s t p^2), \\
F_9 &= r s (u + p^2 s) - v P_{12} + w (w + p^4 s).
\end{split}
\]
Here $P_{12}, Q_9 \in \mbC [p,q,r,s,t,u,v,w]$ are homogeneous polynomials of the indicated degree.
Recall that $(-K_X)^3 = 1/42$.

\begin{Thm} \label{thm:282G}
Let $X$ be a codimension $4$ Fano $3$-fold of numerical type $\#282$ constructed in $\mathsf{G}^{(4)}_2$ format.
Then $X$ is birationally superrigid.
\end{Thm}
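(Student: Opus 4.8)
The plan is to combine Proposition \ref{prop:282} with a hands-on analysis of the single surviving point. By Proposition \ref{prop:282} every curve and every point other than the singular point $\msp$ of type $\frac{1}{6}(1,1,5)$ has already been excluded as a maximal center, so it suffices to treat $\msp$. Using the explicit equations one first checks that $\msp = \msp_r$: every $F_i$ vanishes at $\msp_r$, and the Jacobian there has rank $4$, coming from $F_4,F_7,F_8,F_9$ (whose relevant monomials $r^2q, r^2t, r^2u, r^2v$ are forced by $P_{12}\ni r^2$ and $Q_9\ni u$), so these four equations eliminate $q,t,u,v$ and leave $p,s,w$ as orbifold coordinates of weights $1,1,5$. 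I would then form the Kawamata blowup $\varphi\colon (E\subset Y)\to(\msp\in X)$, with $E\cong\mbP(1,1,5)$, and record the estimates $\ord_E(D_z)=\overline{a_z}/6$ coming from the admissible weight $\tfrac16(1,6,1,2,3,4,5)$ on $(p,q,s,t,u,v,w)$.

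Excluding $\msp$ amounts, via Lemma \ref{lem:exclsingNE}, to proving $(-K_Y)^2\notin\Int\bNE(Y)$, i.e.\ that $-\varphi^*K_X-\tfrac{5}{42}E$ is nef, where $\tfrac{5}{42}=\wprod(\msp)(-K_X)^3$ is the critical slope. The first task is to see exactly why the numerical arguments of Section \ref{sec:proofnum} break down here: among the coordinate divisors only $D_p,D_q$ reach the required vanishing ratio $\tfrac{5}{42}$, yet $\Pi_X(p,q)$ is a curve through $\msp$; and, in contrast to the $\frac15(1,2,3)$ point of type $\#308$, quasi-smoothness forces $P_{12}\ni r^2$ and $Q_9\ni u$, so the lowest-weight parts of $F_4,F_7,F_8,F_9$ always carry the extra terms $ws$, $rs^2$, $-str$, $rsu$ and are never pure monomials. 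Thus Lemma \ref{lem:ordwt}(2) yields no order boost and Lemma \ref{lem:exclcrisingpt} does not apply.

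The plan is then to feed the explicit $\mathsf{G}^{(4)}_2$ equations into the infinitely-many-curves criterion, Lemma \ref{lem:exclsinginfini}. The arithmetic inputs are finiteness statements such as $\Pi_X(p,q,s)=\{\msp\}$ and $\Pi_X(p,q,w)=\{\msp,\msp_s\}$, which follow because $X$ has no singular points of index $8,9,10,11$, so that $t^2\in F_1$, $u^2\in F_4$, $v^2\in F_6$, $w^2\in F_9$, and successive substitution on $\Pi(p,q,s)$ collapses the locus to $\msp$. These say precisely that the coordinate divisors of small vanishing ratio meet $\Pi_X(p,q)$ only at finitely many points. With the pencil $\mcL=\langle q,p^6\rangle\subset|-6K_X|$ (base locus $\Pi_X(p,q)$, general member of order $1$ along $E$) and an appropriate divisor $S$ through $\msp$, one tests condition (1) of Lemma \ref{lem:exclsinginfini} against these finiteness statements and condition (2) against the intersection formula $(-K_Y\cdot\tilde S\cdot\tilde L)=n_Sn_L(-K_X)^3-\tfrac{6}{5}\ord_E(S)\ord_E(L)$.

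The main obstacle I anticipate is that the slope $\tfrac{5}{42}$ is \emph{exactly} saturated. Condition (2) requires the product of vanishing ratios to be at least $\tfrac{5}{252}$, which essentially forces any usable $S$ to have ratio $\tfrac16$ — hence to be built from $p,q$ and to contain the curve $\Pi_X(p,q)$ — whereas condition (1) asks $S$ to meet $\Pi_X(p,q)$ in finitely many points, i.e.\ \emph{not} to contain it. Reconciling these two demands is the crux, and it is exactly where the precise shape of the relations, rather than the numerical data, is indispensable: I expect to have to replace the naive coordinate divisors by combinations dictated by the equations (using the cancellations they enforce), or equivalently to determine the second extremal ray of $\bNE(Y)$ directly from the proper transform of $\Pi_X(p,q)$ and the local equations at $\msp$, and to verify that its slope is at least $\tfrac{5}{42}$.
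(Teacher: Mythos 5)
Your reduction to the $\frac{1}{6}(1,1,5)$ point $\msp=\msp_r$ via Proposition \ref{prop:282}, and your diagnosis of why the numerical lemmas fail there, are both correct and match the paper's motivation: the only coordinate divisors whose vanishing ratio along $E$ reaches the critical slope $\tfrac{5}{42}=\wprod(\msp)(-K_X)^3$ are $D_p$ and $D_q$, and $\Gamma:=\Pi_X(p,q)$ is a curve through $\msp$. But the proposal stops exactly at the decisive step. The tool you commit to, Lemma \ref{lem:exclsinginfini}, is structurally blocked for precisely the reason you flag: condition (2) forces $S$ to be supported on $D_p\cup D_q$, hence to contain $\Gamma=\Bs\mcL$, which kills condition (1); and your closing sentence (``I expect to have to \dots determine the second extremal ray \dots and verify that its slope is at least $\tfrac{5}{42}$'') is a statement of intent, not an argument. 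As written, $\msp$ is not excluded and the theorem is not proved.

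What the paper actually does at this point is a single-curve argument rather than an infinitely-many-curves argument. Using the explicit $\mathsf{G}_2^{(4)}$ equations it first proves that $\Gamma$ is an \emph{irreducible and reduced} curve: quasi-smoothness forces $P_{12}|_{\Pi}=\lambda r^2$, $Q_9|_{\Pi}=\mu u$ with $\lambda\mu\neq 0$, and on the chart $w\neq 0$ the restricted equations collapse $\Gamma$ to the quotient of the irreducible plane affine curve $\lambda r^2v+\mu^3rv^6-1=0$, while $\Gamma\cap(w=0)=\{\msp_r,\msp_s\}$. It then computes $E$ explicitly from the initial forms $f_4,f_7,f_8,f_9$ to see that $\tilde D_p\cap\tilde D_q\cap E$ is finite, whence $\tilde D_p\cdot\tilde D_q=\tilde\Gamma$ as $1$-cycles, and evaluates
\[
(-K_Y\cdot\tilde\Gamma)=(\tilde D_p^2\cdot\tilde D_q)=\frac{1}{7}-\frac{e}{30}<0,\qquad e=6\ord_E(D_q)\ge 6,
\]
concluding by the criterion of \cite[Lemma 2.18]{OkadaII} (one irreducible curve meeting $E$ positively and intersecting $-K_Y$ nonpositively, realized as a complete intersection of divisors), not by Lemma \ref{lem:exclsinginfini}. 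The irreducibility of $\Gamma$ and the cycle-theoretic identity $\tilde D_p\cdot\tilde D_q=\tilde\Gamma$ are the technical heart of the proof and are entirely absent from your proposal; without them the negativity computation above proves nothing, since $\tilde D_p^2\cdot\tilde D_q$ could a priori distribute over several components with mixed signs.
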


\begin{proof}
By Proposition \ref{prop:282}, it remains to exclude the singular point $\msp \in X$ of type $\frac{1}{6} (1,1,5)$ as a maximal center.
The point $\msp$ corresponds to the unique solution solution of the equations
\[
p = s = t = u = v = w = F_3 = F_4 = 0,
\]
and we have $\msp = \msp_r$.
We set $\mcC = \{p,q\}$, $\Pi = \Pi (\mcC)$ and $\Gamma := \Pi_X (\mcC) = \Pi \cap X$.

We will show that $\Gamma$ is an irreducible and reduced curve.
We can write
\[
P_{12}|_{\Pi} = \lambda r^2, \quad
Q_9|_{\Pi} = \mu u,
\]
where $\lambda, \mu \in \mbC$.
By the quasi-smoothness of $X$ at $\msp$, we see that $\lambda, \mu \ne 0$.
Then we have
\begin{align*}
F_1|_{\Pi} &= t^2 + \mu s u, & F_4|_{\Pi} &= w s + u^2, & F_7|_{\Pi} &= r s^2 - w u + \lambda t r^2, \\
F_2|_{\Pi} &= u t + s v, & F_5|_{\Pi} &= t w - u v, & F_8|_{\Pi} &= \lambda \mu r^2 u - (v w + s t r), \\
F_3|_{\Pi} &= t v - \mu u^2, & F_6|_{\Pi} &= - \mu u w + v^2, & F_9|_{\Pi} &= r s u - \lambda v r^2 + w^2.
\end{align*}
We work on the open subset $U$ on which $w \ne 0$.
Then $\Gamma \cap U$ is isomorphic to the $\mbZ/11 \mbZ$-quotient of the affine curve
\[
(\lambda r^2 v + \mu^3 r v^6 - 1 = 0) \subset \mbA^2_{r,v}.
\]
It is straightforward to check that the polynomial $\lambda r^2 v + \mu^3 r v^6 -1$ is irreducible.
Thus $\Gamma \cap U$ is an irreducible and reduced affine curve.
It is also straightforward to check that
\[
\Gamma \cap (w = 0) = (p = q = w = 0) = \{\msp_r, \msp_s\}.
\]
This shows that $\Gamma$ is an irreducible and reduced curve.

Let $\varphi \colon (E \subset Y) \to (\msp \in X)$ be the Kawamata blowup and let $\tilde{\Delta}$ be the proper transform via $\varphi$ of a divisor or a curve on $X$.
We show that $\tilde{D}_p \cap \tilde{D}_q \cap E$ does not contain a curve.
Consider the initial weight
\[
\inwt (p, q, s, t, u, v, w) = \frac{1}{6} (1,6,1,2,3,4,5).
\]
We set $f_i = F_i^{\inwt} (p,q,1,s,t,u,v,w)$.
We have
\[
\begin{split}
f_4 &= (w + p^4)s - \lambda q + u (u + p^2 s), \\
f_7 &= s^2 + \lambda t, \\
f_8 &= \lambda \mu u - s t, \\
f_9 &= s (u + p^2 s) - \lambda v.
\end{split}
\]
Since $E$ is isomorphic to the subvariety
\[
(f_4 = f_7 = f_8 = f_9 = 0) \subset \mbP (1_p, 6_q, 1_s, 2_t, 3_u, 4_v, 5_w),
\]
it is straightforward to check that $\tilde{D}_p \cap \tilde{D}_q \cap E$ consists of finite set of points (in fact, $2$ points).
Thus we have $\tilde{D}_p \cdot \tilde{D}_q = \tilde{\Gamma}$ since $D_p \cdot D_q = \Gamma$.

We have 
\[
\tilde{D}_p \sim - \varphi^*K_X - \frac{1}{6} E, \quad
\tilde{D}_q \sim - 6 \varphi^*K_X - \frac{e}{6} E,
\]
for some integer $e \ge 6$ and hence
\[
(\tilde{D}_p \cdot \tilde{\Gamma}) 
= (\tilde{D}_p^2 \cdot \tilde{D}_q)
= \frac{1}{7} - \frac{e}{30} < 0.
\]
By \cite[Lemma 2.18]{OkadaII}, $\msp$ is not a maximal center.
\end{proof}

\subsection{$\# 282$ by $\mathsf{C}_2$ format}
Let $X$ be a codimension $4$ prime Fano $3$-fold of numerical type $\# 282$ constructed in $\mathsf{G}_2^{(4)}$ format.
Then, by \cite[Example 5.5]{CD}, $X$ is defined by the following polynomials in $\mbP (1_p, 6_q, 6_r, 7_s, 8_t, 9_u, 10_v, 11_w)$.
\[
\begin{split}
F_1 &= t R_8 - S_6 Q_{10} + s u, \\
F_2 &= t u - w S_6 + s v, \\
F_3 &= r S_6^2 - v R_8 + u^2, \\
F_4 &= t Q_{10} - S_6 P_{12} + s w, \\
F_5 &= r s S_6 - w R_8 + u Q_{10}, \\
F_6 &= r s^2 - P_{12} R_8 + Q_{10}^2, \\
F_7 &= r t S_6 - v Q_{10} + u w, \\
F_8 &= r s t - w Q_{10} + u P_{12}, \\
F_9 &= r t^2 - v P_{12} + w^2.
\end{split}
\]
Here $P_{12}, Q_{10}, R_8, S_6 \in \mbC [p,q,r,s,t,u,v,w]$ are homogeneous polynomials of the indicated degree.

\begin{Thm} \label{thm:282C}
Let $X$ be a quasi-smooth prime codimension $4$ Fano $3$-fold of numerical type $\#282$ constructed by $\mathsf{C}_2$ format.
We assume that $q \in S_6$.
Then $X$ is birationally superrigid.
\end{Thm}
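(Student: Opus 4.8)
The plan is to run the argument of Theorem \ref{thm:282G} essentially verbatim, since by Proposition \ref{prop:282} the only center that still needs to be excluded is the singular point $\msp$ of type $\frac{1}{6}(1,1,5)$. First I would exploit the hypothesis $q \in S_6$ to normalize coordinates: the only degree-$6$ monomials available are $q$, $r$ and $p^6$, so $S_6 = \alpha q + \beta r + \zeta p^6$ with $\alpha \neq 0$, and after the graded change of coordinates replacing $q$ I may assume $S_6 = q$. Restricting the defining polynomials to the $(q,r)$-line $(p=s=t=u=v=w=0)$ then leaves only $F_3$ and $F_4$ nontrivial, and a short inspection shows that (for general $X$) the point $\msp_q$ lies off $X$ while the unique index-$6$ point is $\msp = \msp_r$. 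Thus it suffices to prove that $\msp_r$ is not a maximal center.

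Next, in parallel with Theorem \ref{thm:282G}, I set $\mcC = \{p,q\}$, $\Pi = \Pi(\mcC)$ and $\Gamma = \Pi_X(\mcC) = D_p \cap D_q$, and aim to show that $\Gamma$ is an irreducible and reduced curve through $\msp$. Writing $S_6|_{\Pi} = 0$, $R_8|_{\Pi} = b\,t$, $Q_{10}|_{\Pi} = a\,v$ and $P_{12}|_{\Pi} = c\,r^2$, the quasi-smoothness of $X$ at $\msp_r$ forces $b \neq 0$ and $c \neq 0$, so the nine restricted equations $F_i|_{\Pi}$ become a fully explicit system. On the chart $w \neq 0$ I would solve $F_7|_{\Pi}, F_4|_{\Pi}, F_5|_{\Pi}, F_3|_{\Pi}$ for $u, s, t$ as functions of $v$, check that $F_1|_{\Pi}, F_2|_{\Pi}, F_6|_{\Pi}, F_8|_{\Pi}$ are then automatically satisfied, and reduce the whole system to a single plane affine equation of the shape
\[
\alpha\, r v^{6} - c\, r^{2} v + 1 = 0 \quad \text{in } \mbA^2_{r,v}, \qquad \alpha, c \in \mbC \setminus \{0\},
\]
whose irreducibility is verified directly (e.g.\ as a quadratic in $r$ with non-square discriminant). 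A separate inspection gives $\Gamma \cap (w=0) = \{\msp_r, \msp_s\}$, so $\Gamma$ is irreducible and reduced.

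Finally I would perform the intersection computation on the Kawamata blowup $\varphi \colon (E \subset Y) \to (\msp \in X)$. From the admissible weight $\inwt(p,q,s,t,u,v,w) = \frac{1}{6}(1,6,1,2,3,4,5)$ at $\msp$, which satisfies the KBL condition, Lemma \ref{lem:ordwt} yields $\ord_E(D_p) \geq 1/6$ and $\ord_E(D_q) \geq 1$; write $\tilde{D}_p \sim -\varphi^*K_X - c_p E$ and $\tilde{D}_q \sim -6\varphi^*K_X - c_q E$ with $c_p \geq 1/6$, $c_q \geq 1$. Using the equations $f_i = F_i^{\inwt}(p,q,1,s,t,u,v,w)$ describing $E$ inside $\mbP(1_p,6_q,1_s,2_t,3_u,4_v,5_w)$, I would check that $\tilde{D}_p \cap \tilde{D}_q \cap E$ is finite, so that $\tilde{D}_p \cdot \tilde{D}_q = \tilde{\Gamma}$. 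Since $(E^3) = 36/5$ and $(-K_X)^3 = 1/42$, only the $(\varphi^*K_X)^3$ and $E^3$ terms survive, and
\[
(\tilde{D}_p \cdot \tilde{\Gamma}) = (\tilde{D}_p^2 \cdot \tilde{D}_q) = 6(-K_X)^3 - c_p^2 c_q (E^3) \le \frac{6}{42} - \frac{1}{36}\cdot\frac{36}{5} = \frac{1}{7} - \frac{1}{5} = -\frac{2}{35} < 0,
\]
so $\msp$ is not a maximal center by \cite[Lemma 2.18]{OkadaII}.

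The part I expect to be the genuine obstacle is the irreducibility of $\Gamma$ together with the finiteness of $\tilde{D}_p \cap \tilde{D}_q \cap E$. Unlike the $\mathsf{G}_2^{(4)}$ case, where the entries are explicit, here $P_{12}, Q_{10}, R_8, S_6$ are general forms; quasi-smoothness at $\msp_r$ delivers $b, c \neq 0$, but the nonvanishing $a \neq 0$ that makes the reduced plane curve above irreducible (and the exceptional intersection finite) is exactly where the generality hypothesis on $X$ is needed. Once $a \neq 0$ is secured, the remaining verifications are routine transcriptions of the computations in Theorem \ref{thm:282G}.
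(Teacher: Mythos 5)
Your proposal follows the paper's proof essentially verbatim: reduce to the $\frac{1}{6}(1,1,5)$ point via Proposition \ref{prop:282}, normalize $S_6=q$, show $\Gamma=D_p\cap D_q$ is an irreducible reduced curve by restricting to $\Pi(p,q)$ and passing to an affine chart (the paper uses the chart $s\neq 0$ rather than $w\neq 0$, an immaterial difference), verify that $\tilde{D}_p\cap\tilde{D}_q\cap E$ is finite via the initial-weight description of $E$, and conclude from $(\tilde{D}_p^2\cdot\tilde{D}_q)\le \frac{1}{7}-\frac{1}{5}<0$ and \cite[Lemma 2.18]{OkadaII}, exactly as in Theorem \ref{thm:282G}. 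The only point worth flagging is that the nonvanishing $a\neq 0$ (your coefficient of $v$ in $Q_{10}|_{\Pi}$, the paper's $\mu$), which you attribute to a generality hypothesis, is in fact forced by quasi-smoothness: since the basket contains no point of index $10$ we have $\msp_v\notin X$, so $v^2$ must appear in $F_6$ or $F_7$, and both occurrences require $v\in Q_{10}$ --- hence no generality beyond $q\in S_6$ is needed, matching the theorem as stated.
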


\begin{proof}
By Proposition \ref{prop:282}, it remains to exclude the singular point $\msp$ of type $\frac{1}{6} (1,1,5)$ as a maximal center.

Replacing $q$, we may assume that $S_6 = q$.
The singular point $\msp$ corresponds to the solution of the equation
\[
p = s = t = u = v = w = S_6 = 0,
\]
and thus $\msp = \msp_r$.
We set $\mcC = \{p,q\}$ and $\Pi = \Pi (\mcC)$.

We will show that $\Gamma := \Pi \cap X$ is an irreducible and reduced curve.
We have $\Pi_X (\{p,q,r,s\}) = \emptyset$ (see the proof of Proposition \ref{prop:282}).
Hence $\Gamma \cap (s = 0) = \Pi_X (\{p,q,s\})$ does not contain a curve and it remains to show that $\Gamma \cap U_s$ is irreducible and reduced, where $U_s := (s \ne 0) \subset \mbP$ is the open subset.
We can write
\[
P_{12}|_{\Pi} = \lambda r^2, \ 
Q_{10}|_{\Pi} = \mu v, \ 
R_8|_{\Pi} = \nu t,
\]
for some $\lambda, \mu, \nu \in \mbC$, and we have $S_6|_{\Pi} = 0$.
We see that $t^2$ appears in $F_1$ (resp.\ $v^2$ appears in either $F_6$ or $F_7$) with non-zero coefficient since $\msp_t \notin X$ (resp.\ $\msp_v \notin X$), which implies that $\mu, \ne 0$.
Note that $F_i|_{\Pi} = F_i|_{\Pi} (r,s,t,u,v,w)$ is a polynomial in variables $r,s,t,u,v,w$ and we set $f_i = F_i|_{\Pi} (r,1,t,u,v,w)$.
Let $C \subset \mbA^5_{r,t,u,v,w}$ be the affine scheme defined by the equations
\[
f_1 = f_2 = \cdots = f_9 = 0.
\]
Then $\Gamma \cap U_s$ is isomorphic to the quotient of $C$ by the natural $\mbZ/7 \mbZ$-action.
We have
\begin{align*}
f_1 &= \nu t^2 + u, & f_2 &= t u + v, & f_3 &= - \nu t v + u^2, \\
f_4 &= \mu t v + w, & f_5 &= - \nu t w + \mu u v, & f_6 &= r - \lambda \nu r^2 t + \mu^2 v^2, \\
f_7 &= - \mu v^2 + u w, & f_8 &= r t - \mu v w + \lambda r^2 u, & f_9 &= r t^2 - \lambda r^2 v + w^2.
\end{align*}
By the equations $f_1 = 0$, $f_2 = 0$ and $f_4 = 0$, we have
\[
u = - \nu t^2, \ 
v = - t u = \nu t^3, \ 
w = - \mu t v = - \mu \nu t^4.
\]
By eliminating the variables $u, v, w$ and cleaning up the equations, $C$ is isomorphic to the hypersurface in $\mbA^2_{r, t}$ defined by
\[
r - \lambda \nu r^2 t + \mu^2 \nu^2 t^6 = 0,
\]
which is an irreducible and reduced curve since $\mu \nu \ne 0$, and so is $\Gamma \cap U_s$.
Thus $\Gamma$ is an irreducible and reduced curve.

Let $\varphi \colon (E \subset Y) \to (\msp \in X)$ be the Kawamata blowup.
We have $e := \ord_E (D_q) \ge 6/6$ and $\ord_E (D_p) = 1/6$ so that we have 
\[
\tilde{D}_q \sim - 6 \varphi^*K_X - \frac{e}{6} E = - 6 K_Y + \frac{6-e}{6} E, \quad
\tilde{D}_p \sim - \varphi^*K_X - \frac{1}{6} E = -K_Y.
\]
We show that $\tilde{D}_q \cap \tilde{D}_p \cap E$ does not contain a curve.
The Kawamata blowup $\varphi$ is realized as the weighted blowup at $\msp$ with the weight
\[
\inwt (p,q,s,t,u,v,w) = \frac{1}{6} (1,6,1,2,3,4,5).
\]
We have
\[
\begin{split}
F_4^{\inwt} &= - \lambda q r^2 + t (\mu v + h) + s w, \\
F_6^{\inwt} &= - \lambda \mu t r^2 + r s^2, \\
F_8^{\inwt} &= \lambda u r^2 + r s t, \\
F_9^{\inwt} &= - \lambda v r^2 + r t^2,
\end{split}
\]
where we define $h := Q_{10}^{\inwt} - \mu v$.
Note that $h$ is a linear combination of $u p, t p^2, s p^3, r p^4$ and thus $h$ is divisible by $p$. 
It follows that $E$ is isomorphic to the subscheme in $\mbP (1_p, 6_q, 1_s, 2_t, 3_u, 4_v, 5_w)$ defined by the equations
\[
\lambda q - t (\mu v + h) - s w = \lambda \mu t - s^2 = \lambda u + s t = \lambda v + t^2 = 0.
\]
It is now straightforward to check that $\tilde{D}_q \cap \tilde{D}_p \cap E = (p = q = 0) \cap E$ is a finite set of points (in fact, it consists of $2$ points).
This shows that $\tilde{D}_q \cdot \tilde{D}_p = \tilde{\Gamma}$ since $D_q \cdot D_p = \Gamma$.
We have
\[
(\tilde{D}_p \cdot \tilde{\Gamma}) = (\tilde{D}_p^2 \cdot \tilde{D}_q) = 6 (-K_X)^3 - \frac{e}{6^3} (E^3) = \frac{1}{7} - \frac{e}{30} < 0
\]
since $e \ge 6$.
By \cite[Lemma 2.18]{OkadaII}, $\msp$ is not a maximal center.
\end{proof}

\section{On further problems} \label{sec:discussion}

\subsection{Prime Fano $3$-folds with no projection centers}

We further investigate birational superrigidity of prime Fano $3$-folds of codimension $c$ with no projection centers for $5 \le c \le 9$.
There are only a few such candidates, which can be summarized as follows.

\begin{itemize}
\item In codimension $c \in \{5,7,8\}$, there is a unique candidate and it corresponds to smooth prime Fano $3$-folds of degree $2 c + 2$.
All of these Fano $3$-folds are rational (see \cite[Corollary 4.3.5 or \S 12.2]{IP}) and are not birationally superrigid.
\item In codimension $6$, there are $2$ candidates; one candidate corresponds to smooth prime Fano $3$-folds of degree $14$ which are birational to smooth cubic $3$-folds (see \cite{Takeuchi}, \cite{Isk}) and are not birationally superrigid, and the existence is not known for the other candidate which is $\# 78$ in the database. 
\item In codimension $9$, there is a unique candidate of smooth prime Fano $3$-folds of degree $20$.
However, according to the classification of smooth Fano $3$-folds there is no such Fano $3$-fold (see e.g.\ \cite[Theorem 0.1]{Takeuchi}).
\end{itemize}

It follows that, in codimension up to $9$, $\# 78$ is the only remaining unknown case for birational superrigidity (of general members). 

\begin{Question}
Do there exist prime Fano $3$-folds which correspond to $\# 78$?
If yes, then is a (general) such Fano $3$-fold birationally superrigid?
\end{Question}


In codimension $10$ and higher, there are a lot of candidates of Fano $3$-folds with no projection centers.
We expect that many of them are non-existence cases and that there are only a few birationally superrigid Fano $3$-folds in higher codimensions.

\begin{Question}
Is there a numerical type (in other words, Graded Ring Database ID)  $\# \mathsf{i}$ in codimension greater than $9$ such that a (general) quasi-smooth prime Fano $3$-fold of numerical type $\# \mathsf{i}$ is birationally superrigid?
\end{Question} 

\subsection{Classification of birationally superrigid Fano $3$-folds}

There are many difficulties in the complete classification of birationally superrigid Fano $3$-folds.
For example, we need to consider Fano $3$-folds which are not necessarily quasi-smooth or not necessarily prime, and also we need to understand subtle behaviors of birational superrigidity in a family, etc.
 
\begin{Question}
Is there a birationally superrigid Fano $3$-fold which is either of Fano index greater than $1$ or has a non-quotient singularity?
\end{Question}

\begin{Rem}
By recent developments \cite{Puk2}, \cite{Suz}, \cite{LZ}, it has been known that there exist birationally superrigid Fano varieties which have non-quotient singularities (see \cite{Puk2}, \cite{Suz}, \cite{LZ}) at least in very high dimensions.
On the other hand, only a little is known for Fano varieties of index greater than $1$ (cf.\ \cite{Puk1}) and there is no example of birationally superrigid Fano varieties of index greater than $1$.
\end{Rem}

We concentrate on quasi-smooth prime Fano $3$-folds.
Even in that case, it is necessary to consider those with a projection center, which are not treated in this paper.
Let $X$ be a general quasi-smooth prime Fano $3$-fold of codimension $c$.
Then the following are known.
\begin{itemize}
\item When $c = 1$, $X$ is birationally superrigid if and only if $X$ does not admit a type $\mathrm{I}$ projection center (see \cite{IM}, \cite{CPR}, \cite{CP}).
\item When $c = 2, 3$, $X$ is birationally superrigid if and only if $X$ is singular and admits no projection center (see \cite{IPuk}, \cite{OkadaI}, \cite{AZ}, \cite{AO}).
\end{itemize}

With these evidences, we expect the following.

\begin{Conj}
Let $X$ be a general quasi-smooth prime Fano $3$-fold of codimension at least $2$.
Then $X$ is birationally superrigid if and only if $X$ is singular and admits no projection centers.
\end{Conj}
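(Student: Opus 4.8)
The conjecture is an equivalence, and since the codimension $2$ and $3$ cases are already theorems (\cite{IPuk}, \cite{OkadaI}, \cite{AZ}, \cite{AO}), the plan is to take these as a base and to establish both implications for every codimension. I would treat the two directions by entirely different methods: the \emph{only if} direction is structural and should follow from existing and forthcoming constructions of Sarkisov links, whereas the \emph{if} direction is the analytic core and is where the exclusion machinery of Section~\ref{sec:proofnum} is meant to operate.

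For the \emph{only if} direction I argue by contraposition, so that a general $X$ which is either smooth or carries a projection center must fail to be superrigid. When $X$ has a projection center, one produces a nontrivial link to another Mori fiber space: the type $\mathrm{I}$ centers are resolved by the unprojection of \cite{BKR} and the cluster formats of \cite{CD}, and the plan is to exhibit analogous links for the remaining types $\mathrm{II}_1,\dots,\mathrm{II}_7,\mathrm{IV}$ recorded in \cite{GRDB}. When $X$ is smooth, I invoke the Iskovskikh--Mukai classification: the smooth prime Fano $3$-folds of codimension $\ge 2$ are the anticanonically embedded genus-$g$ varieties $V_{2g-2}$ with $g\ge 4$, and each of these is known to be non-rigid through explicit double projections (\cite{Takeuchi}, \cite{Isk}), the degree-$10$ case $\#29374$ being handled in \cite{DIM}. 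The restriction to codimension $\ge 2$ is sharp here, since the smooth quartic in codimension $1$ \emph{is} superrigid by \cite{IM}.

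The \emph{if} direction is where I expect the real work. Given a general quasi-smooth prime $X$ that is singular and has no projection center, the task is to exclude every subvariety as a maximal center, following the template of Theorems~\ref{thm:25}, \ref{thm:166} and \ref{thm:308}. I would first dispose of all curves by Lemma~\ref{lem:exclcurve} (using $(-K_X)^3\le 1$) and of all nonsingular points by Lemma~\ref{lem:exclnonsingpt}, and then run through the terminal quotient singular points listed in $\mcB_X$ one index at a time: Lemma~\ref{lem:exclcri1/2pt} handles the $\tfrac12(1,1,1)$ points, while the higher-index points are attacked with the initial-vanishing-ratio test $\ivr_{\msp}(\mcC)\ge\wprod(\msp)(-K_X)^3$ of Lemma~\ref{lem:exclcrisingpt}, resorting to Lemmas~\ref{lem:exclsingNE} and \ref{lem:exclsinginfini} when no single nef divisor suffices.

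The principal obstacle is uniformity. Unlike the finite problem in codimension $4$, the database \cite{GRDB} produces unboundedly many candidate numerical types as the codimension grows, so a case-by-case verification is not manifestly finite; one would first need a finiteness statement guaranteeing that only finitely many types admit a quasi-smooth, singular, projection-center-free member, together with a resolution of the existence questions (such as $\#78$, $\#166$, $\#308$) that already obstruct the low-codimension cases. Worse, the numerical criteria provably fail for certain stubborn singular points---the $\tfrac16(1,1,5)$ point of $\#282$ is the prototype, where Lemma~\ref{lem:exclcrisingpt} yields only an equality and one must instead exhibit an explicit irreducible curve $\Gamma=\Pi_X(\mcC)$ and verify $(\tilde D_p\cdot\tilde\Gamma)<0$ as in Theorems~\ref{thm:282G} and \ref{thm:282C}. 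Such exclusions are not numerical: they read off the defining equations and therefore depend on the component of the Hilbert scheme (the $\mathsf{G}^{(4)}_2$ versus $\mathsf{C}_2$ formats) and on the genericity hypothesis. Finding a way to control these equation-level computations uniformly, rather than family by family, is the step I expect to be genuinely hard, and it is presumably why the statement is posed only as a conjecture.
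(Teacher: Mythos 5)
The statement you are addressing is posed in the paper as a \emph{conjecture}; the paper contains no proof of it, and what you have written is a research programme rather than a proof, as you yourself concede in your final paragraph. Judged as a proof attempt it therefore has a genuine and unavoidable gap: neither implication is actually established beyond the low-codimension cases that are already cited as evidence. Concretely, in the \emph{only if} direction you assert that every projection center (types $\mathrm{II}_1,\dots,\mathrm{II}_7,\mathrm{IV}$ as well as type $\mathrm{I}$) gives rise to an honest Sarkisov link to a different Mori fiber space; but the unprojection constructions of \cite{BKR} and \cite{CD} produce a birational modification, and upgrading this to a link that genuinely violates superrigidity (i.e.\ verifying that the relevant Kawamata blowup initiates a link and that the link is not biregular) is exactly the content of the codimension $\le 3$ theorems and has not been carried out in general. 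The smooth case likewise rests on classification and existence results that are only available in the ranges where smooth prime Fano $3$-folds are classified.

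In the \emph{if} direction the gap is even more basic: the exclusion machinery of Section~\ref{sec:proofnum} (Lemmas~\ref{lem:exclcurve}, \ref{lem:exclnonsingpt}, \ref{lem:exclcri1/2pt}, \ref{lem:exclcrisingpt}, \ref{lem:exclsingNE}, \ref{lem:exclsinginfini}) is applied in the paper to five explicitly listed numerical types, and even there the hardest points (the $\tfrac{1}{6}(1,1,5)$ point of $\#282$) require the explicit defining equations of a particular format, a genericity hypothesis, and the irreducibility of a specific curve $\Gamma = \Pi_X(\mcC)$. You correctly identify that no finiteness statement is available to reduce the conjecture to a finite list, that existence is open already for $\#166$, $\#308$ and $\#78$, and that the purely numerical criteria provably degenerate to equalities at certain singular points. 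These observations are accurate and match the discussion in Section~\ref{sec:discussion}, but they are diagnoses of why the statement is conjectural, not steps of a proof. In short: your text is a faithful and well-organized summary of the known evidence and of the obstacles, but it does not constitute a proof, and the paper offers none to compare it against.
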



\end{document}